\newtheorem{theo}{Theorem}[section]
\newtheorem{ques}{Question}
\newtheorem{lemma}[theo]{Lemma}
\newtheorem{claim}[theo]{Claim}
\newtheorem{coro}[theo]{Corollary}
\def\q{\hspace*{\fill}$\Box$\medskip}
\begin{document}
\title{Intersecting families with full difference sets}
\author{Zilong Yan \thanks{School of Mathematics, Hunan University, Changsha 410082, P.R. China. Email: zilongyan@hnu.edu.cn. Supported in part by Postdoctoral fellow fund in China (No. 2023M741131)  and  Postdoctoral Fellowship Program of
CPSF under Grant (No. GZC20240455).} \and Yuejian Peng \thanks{ Corresponding author. School of Mathematics, Hunan University, Changsha, 410082, P.R. China. Email: ypeng1@hnu.edu.cn. \ Supported in part by National Natural Science Foundation of China (Nos. 11931002 and 12371327).}
	}
	
	\maketitle

\begin{abstract}
For a family $\mathcal{F}$ of subsets of a finite set, define $\mathcal{D}(\mathcal{F})=\{F\setminus F': F, F'\in\mathcal{F}\}$.  A family $\mathcal{F}$ is called intersecting if $F\cap F'\not=\emptyset$ for all $F, F'\in\mathcal{F}$. Frankl \cite{Frankl} showed that for a $k$-uniform intersecting family $\mathcal{F}\subset{[n]\choose k}$ with $n\ge k(k+3)$,  $|\mathcal{D}(\mathcal{F})|$ reaches the maximum  if and only if $\mathcal{F}$ is a  $k$-uniform full star. Later, Frankl-Kiselev-Kupavskii \cite{FKK} improved the bound $n\ge k(k+3)$ in the above result of Frankl \cite{Frankl}  to $n\ge 50klnk$ for  $k\ge 50$. For $2k<n<4k$, Frankl-Kiselev-Kupavskii \cite{FKK} showed that there exists a $k$-uniform family $\mathcal{F}\subset{[n]\choose k}$ such that $|\mathcal{D}(\mathcal{F})|$ is larger than $|\mathcal{D}(\mathcal{S})|$, where $\mathcal{S}$ is a full star. This result left the case $n=2k$ open and we show that in this case, $\mathcal{D}(\mathcal{F})$ can be `full' for some $\mathcal{F}\subset{[2k]\choose k}$.
It is clear that for an intersecting family $\mathcal{F}\subset{[n]\choose k}$, $\mathcal{D}(\mathcal{F})\subseteq \cup_{j=0}^{k-1}{[n]\choose j}$.   We say that a  $k$-uniform intersecting family $\mathcal{F}\subset{[n]\choose k}$ has full differences if $\mathcal{D}(\mathcal{F})=\cup_{j=0}^{k-1}{[n]\choose j}$, and we say that a  $k$-uniform intersecting family $\mathcal{F}\subset{[n]\choose k}$ has full differences of size $j$ if ${[n]\choose j} \subseteq \mathcal{D}(\mathcal{F})$. For odd $k$, a $k$-uniform intersecting family $\mathcal{F}\subset{[2k]\choose k}$ having full differences is given in \cite{Frankl}, and Frankl \cite{Frankl} asked for even $k\ge 4$ whether there exists a $k$-uniform intersecting family $\mathcal{F}\subset{[2k]\choose k}$ having full differences of size $k-1$. We answer this question in a stronger form and show that  for even $k\ge 4$, there exists  a $k$-uniform intersecting family $\mathcal{F}\subset{[2k]\choose k}$ having full differences. This also extends the range $2k<n<4k$ in the result of Frankl-Kiselev-Kupavskii \cite{FKK} to $2k\le n<4k$. Furthermore,  we show that if $\mathcal{F}\subset{[n]\choose k}$ is an intersecting family  having full differences of size $k-1$, then $n\le 3k-2$. The bound $n\le 3k-2$ is tight, as evidenced by the Fano plane.

\noindent{\em Keywords:} Intersecting family, Difference set.
	\end{abstract}

	\section{Introduction}
For a family $\mathcal{F}$ of subsets of a finite set, define the family of difference sets $\mathcal{D}(\mathcal{F})=\{F\setminus F': F, F'\in\mathcal{F}\}$. A natural question is how large $\vert\mathcal{D}(\mathcal{F})\vert$ can be. Marica-Sch\"onheim \cite{MaSc} showed that $|\mathcal{D}(\mathcal{F})|\ge |\mathcal{F}|$ holds for any $\mathcal{F}$.  This lower bound is reachable,  on the other hand, $|\mathcal{D}(\mathcal{F})|-|\mathcal{F}|$  can be `very large', so we may want to put some structures on $\mathcal{F}$ when we study $\mathcal{D}(\mathcal{F})$. One interesting class of families in extremal set theory is intersecting families. Various studies on intersection families (see \cite{FT16}) inspired by the foundational  result of Erd\H{o}s, Ko and Rado \cite{EKR1961} were given,  and some of them were applied in other fields. For example, the Ahlswede-Khachatrian theorm \cite{AK} which determines the maximum size of $\mathcal{A}\subseteq {[n]\choose k}$ in which any two sets intersect in at least $t$ elements, plays a crucial role in the hardness-of-approximation theorem for the `vertex cover' problem, proved by Dinur and Safra \cite{DS} (see \cite{KL21}).

A family $\mathcal{F}$ is called {\em intersecting} if $F\cap F'\not=\emptyset$ for all $F, F'\in\mathcal{F}$. Two families $\mathcal{F}$ and $\mathcal{H}$ are called {\em cross-intersecting} if $F\cap H\not=\emptyset$ for all $F\in\mathcal{F}$ and $H\in\mathcal{H}$. 
	For an intersecting family $\mathcal{F}$ of subsets of $[n]=\{1, 2, \ldots, n\}$, Frankl \cite{Frankl} showed that $|\mathcal{D}(\mathcal{F})|\le 2^{n-1}$. For a set $A$ and  a non-negative integer $k$, denote ${A\choose k}=\{F: F\subset A \ \text {and} \ |F\cap A|=k\}$. Frankl \cite{Frankl} also showed that for  an intersecting $k$-uniform family $\mathcal{F}\subset{[n]\choose k}$ with $n\ge k(k+3)$, we have $|\mathcal{D}(\mathcal{F})|\le {n-1\choose k-1}+{n-1\choose k-2}+\dots+{n-1\choose 0}$, equality attained only for a $k$-uniform full star on $[n]$ (consists of all $k$-subsets of $[n]$ containing a fixed element). Later, Frankl-Kiselev-Kupavskii \cite{FKK} improved the bound $n\ge k(k+3)$ in the above result of Frankl \cite{Frankl}  to
	$n\ge 50klnk$ for  $k\ge 50$. Frankl \cite{Frankl} asked whether $|\mathcal{D}(\mathcal{F})|$ reaches the maximum value on $k$-uniform full stars among all $k$-uniform intersecting families $\mathcal{F}$. However,
	Frankl-Kiselev-Kupavskii \cite{FKK} showed the following result.

\begin{theo}[Frankl-Kiselev-Kupavskii \cite{FKK}] For integers $n, k$ satisfying $2k<n<4k$ and $k$ sufficiently large, there is an  intersecting $k$-uniform family $\mathcal{F}\subset{[n]\choose k}$ such that  $|\mathcal{D}(\mathcal{F})|> |\mathcal{D}(\mathcal{S})|$, where $\mathcal{S}$ is a $k$-uniform full star on $[n]$.
\end{theo}

 This result left the case $n=2k$ open. For this case, we show that $\mathcal{D}(\mathcal{F})$ can be `full' for some $k$-uniform family $\mathcal{F}$. Let us be precise.  It is clear that for an intersecting family $\mathcal{F}\subset{[n]\choose k}$, $\mathcal{D}(\mathcal{F})\subseteq \cup_{j=0}^{k-1}{[n]\choose j}$.  We say that a  $k$-uniform intersecting family $\mathcal{F}\subset{[n]\choose k}$ has full differences if $\mathcal{D}(\mathcal{F})=\cup_{j=0}^{k-1}{[n]\choose j}$, and we say that a  $k$-uniform intersecting family $\mathcal{F}\subset{[n]\choose k}$ has full differences of size $j$ if ${[n]\choose j} \subseteq \mathcal{D}(\mathcal{F})$. In this paper, we study that for what values of $n$, there exists an intersecting family $\mathcal{F}\subset{[n]\choose k}$  having full differences. We think that this question has its own interests. It is equivalent to ask when there is an  intersecting family $\mathcal{F}\subset {[n]\choose k}$ such that every non-empty subset $S$ of $[n]$ with size no more than $k-1$  is contained in a sunflower of  $\mathcal{F}$ with $2$ petals in which $S$ is a petal.  A sunflower with $t$ petals is a family of  sets $\{P_1, P_2, \cdots, P_t\}$ satisfying that there is a set $C$  such that $P_i\cap P_j=C$ for any  $1\le i<j\le t$. We call  $C$ the kernel of the sunflower and call $P_i\setminus C$, $1\le i\le t$ the petals of the sunflower. A nonempty set $S\in \mathcal{D}(\mathcal{F})$ is equivalent to that $S$ is contained in a sunflower of  $\mathcal{F}$ with $2$ petals  in which $S$ is a petal. From this point of view, the existence of a $k$-uniform family with full differences is also related to combinatorial design. Constructing designs satisfying certain conditions is  very challenging in general.

For odd $k\ge 3$, an intersecting family $\mathcal{F}\subset{[2k]\choose k}$ having full differences  is given in \cite{Frankl}.

\begin{theo}[\cite{Frankl}]\label{oddk}
Let $k\ge 3$ be odd. Let $\mathcal{G}\subset{[2k]\choose k}$ consist of all $k$-sets $\{a_1, a_2, \dots, a_k\}$ such that $a_1+a_2+\dots+a_k$ is odd. Then $\mathcal{G}$ has full differences, i.e., $\mathcal{D}(\mathcal{G})=\cup_{j=0}^{k-1}{[2k]\choose j}$.
\end{theo}

In the same paper, Frankl \cite{Frankl} asked whether the same is true for even $k\ge 4$.
	\begin{ques}[Frankl \cite{Frankl}]\label{queseven}
		Suppose that $k\ge 4$ is even. Does there exist an intersecting family $\mathcal{F}\subset{[2k]\choose k}$ with full differences of size $k-1$ (i.e., ${[2k]\choose k-1}\subset \mathcal{D}(\mathcal{F})$)?
	\end{ques}
 It is not hard to see  that for $k=2$, there is no intersecting  $2$-uniform family on more than $3$ elements having full differences.
	In this paper, we answer Question \ref{queseven} and  show the following stronger result in Section \ref{2}.
	\begin{theo}\label{main}
		For even $k\ge 4$, there exists an intersecting family $\mathcal{F}\subset{[2k]\choose k}$ with full differences, i.e.,  $\mathcal{D}(\mathcal{F})=\cup_{j=0}^{k-1}{[2k]\choose j}$.
	\end{theo}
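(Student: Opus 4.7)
We aim to construct $\mathcal{F}$ explicitly and verify the coverage property. Because $k$ is even, complementary $k$-subsets of $[2k]$ have the same sum-parity, so the odd-$k$ construction of \cite{Frankl} breaks down; however, since $k-1$ is odd, the parity of $|F\cap[k-1]|$ does separate $F$ from $F^c$, so
\[
\mathcal{F}_0:=\{F\in\textstyle\binom{[2k]}{k}:|F\cap[k-1]|\text{ is odd}\}
\]
is an intersecting family of the maximum size $\binom{2k-1}{k-1}$. We take $\mathcal{F}$ to be $\mathcal{F}_0$ modified by a small number of complementary swaps (i.e.\ replacing some $F\in\mathcal{F}_0$ by $F^c$) chosen to cover the subsets missed by $\mathcal{D}(\mathcal{F}_0)$.

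The coverage condition for an $S$ of size $j\le k-1$ is the existence of $A\in\binom{[2k]\setminus S}{k-j}$ and $B\in\binom{[2k]\setminus S\setminus A}{j}$ with both $S\cup A$ and $A\cup B$ in $\mathcal{F}$: then $F:=S\cup A$ and $F':=A\cup B$ give $F\cap F'=A$ and $F\setminus F'=S$. For $\mathcal{F}_0$ this becomes a parity condition on $|A\cap[k-1]|$ and $|B\cap[k-1]|$, and a counting argument analogous to the odd-$k$ proof recalled in the introduction shows that such $A,B$ exist for every $S$ except the single exception $S=[k-1]$ at $j=k-1$: every $F\supset[k-1]$ has $|F\cap[k-1]|=k-1$ odd automatically, but every $F'\subset[k,2k]$ has $|F'\cap[k-1]|=0$ even, so no legal $F'$ lies in $\mathcal{F}_0$.

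To repair this, we modify $\mathcal{F}_0$ by a short cascade of complementary swaps. The first swap replaces $[k-1]\cup\{a_0\}$ by $[k,2k]\setminus\{a_0\}$ for a fixed $a_0\in[k,2k]$, immediately providing the witness $(F,F')=([k-1]\cup\{b\},\,[k,2k]\setminus\{a_0\})$ for $S=[k-1]$ with any $b\in[k,2k]\setminus\{a_0\}$; intersecting is preserved because the only set of $\mathcal{F}_0$ disjoint from $[k,2k]\setminus\{a_0\}$ is the just-removed $[k-1]\cup\{a_0\}$. The removal, however, deprives the $k-1$ subsets $S=([k-1]\setminus\{x\})\cup\{a_0\}$ ($x\in[k-1]$) of their unique cover, so further swaps of the form $([k-1]\setminus\{x\})\cup\{a_0,y_x\}\leftrightarrow\{x\}\cup([k,2k]\setminus\{a_0,y_x\})$ with carefully chosen $y_x\in[k,2k]\setminus\{a_0\}$ must be carried out. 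At each stage we verify that intersecting is preserved and that no previously covered $S$ is damaged.

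\paragraph*{Main obstacle.} The crux is to choose the $y_x$'s so that the cascade closes up after finitely many swaps without creating new uncovered subsets. Each deleted set $\{x\}\cup([k,2k]\setminus\{a_0,y_x\})$ may have been the unique $F'\in\mathcal{F}_0$ realising some $(k-1)$-subset as a difference, so one must show that an alternative $F'\in\mathcal{F}$ is always available after all modifications. We expect this to force a coherent choice of the $y_x$'s (for instance taking them all distinct, or following an orbit of a cyclic permutation of $[k,2k]$), after which the final verification that $\bigcup_{j=0}^{k-1}\binom{[2k]}{j}\subset\mathcal{D}(\mathcal{F})$ reduces to a careful but essentially routine parity case analysis.
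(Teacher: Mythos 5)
Your construction is genuinely different from the paper's (which partitions $[2k]$ by residues mod $3$ and takes $\mathcal{R}_1=\{F:\sum_{x\in F}x\equiv 1 \pmod 3\}$ plus a carefully built intersecting subfamily of $\mathcal{R}_0$), and several of your individual observations are correct: $\mathcal{F}_0$ is intersecting, any collection of complementary swaps preserves the intersecting property, for $j=k-1$ the unique set missed by $\mathcal{D}(\mathcal{F}_0)$ is indeed $S=[k-1]$, and the first swap does repair it while destroying exactly the $k-1$ witnesses for the sets $([k-1]\setminus\{x\})\cup\{a_0\}$. But what you have written is a plan, not a proof, and the part you defer is precisely the hard part. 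Two concrete gaps: (i) the claim that $\mathcal{D}(\mathcal{F}_0)$ covers every $S$ with $|S|=j<k-1$ is asserted by analogy but never argued; it amounts to finding $\alpha=|A\cap[k-1]|\equiv 1+|S\cap[k-1]|$ and $\beta=|B\cap[k-1]|\equiv|S\cap[k-1]| \pmod 2$ subject to $j-1-|S\cap[k-1]|\le\alpha+\beta\le k-1-|S\cap[k-1]|$, $\alpha\le k-j$, $\beta\le j$, and the feasibility of this system in all boundary cases is exactly the kind of verification that cannot be waved away (it is the $j=k-1$ boundary case that produces the exception in the first place). (ii) Far more seriously, the cascade is not shown to close. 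Each second-round swap removes a set $\{x\}\cup([k,2k]\setminus\{a_0,y_x\})$ that serves as a witness --- both as minuend and as subtrahend, and for differences of every size $j\le k-1$, not only $j=k-1$ --- for many sets $S$, some of which have only two witnesses in $\mathcal{F}_0$ to begin with (e.g.\ $S=([k-1]\setminus\{x,x'\})\cup\{a_0,y_x\}$ has exactly $k-1-|S\cap[k-1]|=2$ admissible subtrahends, and distinct swaps can in principle remove both). You state that a coherent choice of the $y_x$ ``is expected'' to exist and that the final verification ``reduces to a routine parity case analysis,'' but no such choice is exhibited and no termination or non-interference argument is given.

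For comparison, the paper faces the same structural difficulty --- its base family $\mathcal{R}_1$ misses all $(k-1)$-sets containing an entire residue class --- and resolves it not by swaps but by \emph{adding} a subfamily $\mathcal{R}_0^*\subset\mathcal{R}_0$ (so no previously realized difference can be lost), at the cost of having to prove that $\mathcal{R}_0^*$ is itself intersecting; that proof occupies all of Section 2 and both appendices and requires the auxiliary matching-parity machinery of Lemmas \ref{sum}--\ref{delete}. Your swap-based repair avoids the cross-intersecting issue entirely (a real structural advantage) but converts it into a damage-propagation problem of comparable difficulty, which your write-up leaves unsolved. As it stands the argument establishes only that $\mathcal{D}(\mathcal{F}_0)\supseteq\bigcup_{j=0}^{k-1}\binom{[2k]}{j}\setminus\{[k-1]\}$ modulo point (i), not the theorem.
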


Combining  Theorems \ref{oddk} and  \ref{main},  there exists an intersecting family $\mathcal{F}\subset{[2k]\choose k}$ with full differences. 
It is natural  to study what values of $n>2k$ are such that there is an intersecting family $\mathcal{F}\subset{[n]\choose k}$ with full differences.
We will prove that if such an $n$ exists, it cannot be too large. Indeed, we will prove the following tight bound in Section \ref{sub3k-2}.
	
\begin{theo}\label{3k-2}
If $\mathcal{F}\subset{[n]\choose k}$ is an intersecting family satisfying ${[n]\choose k-1}\subset \mathcal{D}(\mathcal{F})$, then $n\le 3k-2$.
\end{theo}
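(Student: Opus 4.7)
The plan is to argue by contradiction: assume $n\ge 3k-1$ and extract a rigid structure that forces a contradiction. Fix an arbitrary $F_0\in\mathcal{F}$; the strategy pins down, via the intersecting condition, a single ``central'' element $x^*\in F_0$ that extends every $(k-1)$-subset of $[n]\setminus F_0$ to a member of $\mathcal{F}$.

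For each $(k-1)$-set $S\subset [n]\setminus F_0$, the hypothesis supplies $F,F'\in\mathcal{F}$ with $F\setminus F'=S$, whence $F=S\cup\{x_S\}$ and $F\cap F'=\{x_S\}$. Since $\mathcal{F}$ is intersecting and $S\cap F_0=\emptyset$, we have $x_S\in F\cap F_0$, so $x_S\in F_0$. I next claim $x_S$ is the \emph{unique} element $x\in[n]\setminus S$ with $S\cup\{x\}\in\mathcal{F}$. To see this, pick any $(k-1)$-set $S_2\subset[n]\setminus F_0$ disjoint from $S$---possible because $|[n]\setminus(F_0\cup S)|=n-2k+1\ge k$---and its extension $S_2\cup\{x_{S_2}\}\in\mathcal{F}$. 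Any $x\in[n]\setminus S$ with $S\cup\{x\}\in\mathcal{F}$ must satisfy $x\in F_0$ by the same intersecting argument, and then expanding $(S\cup\{x\})\cap(S_2\cup\{x_{S_2}\})$ using $S\cap F_0=\emptyset=S_2\cap F_0$ forces $x=x_{S_2}$; thus all such $x$ coincide.

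The same expansion shows $x_{S_1}=x_{S_2}$ whenever $S_1,S_2\subset[n]\setminus F_0$ are disjoint $(k-1)$-sets. Because $n-k\ge 2(k-1)+1$, the Kneser graph on $(k-1)$-subsets of $[n]\setminus F_0$ under disjointness is connected: any two $(k-1)$-sets $S,S'$ differing in one element satisfy $|[n]\setminus(F_0\cup S\cup S')|\ge k-1$, so a common disjoint $(k-1)$-neighbour exists and yields a two-step path, and arbitrary $(k-1)$-subsets are connected by iterating single-element swaps. Therefore $x_S=x^*$ is constant on $\binom{[n]\setminus F_0}{k-1}$, and $\mathcal{F}\supseteq\{S\cup\{x^*\}:S\in\binom{[n]\setminus F_0}{k-1}\}$.

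To finish, note $\mathcal{F}$ cannot be a full star: if every member contained a common element $a$, then no $F'\in\mathcal{F}$ could be disjoint from a $(k-1)$-set $S\ni a$, contradicting $S\in\mathcal{D}(\mathcal{F})$. Hence some $F_1\in\mathcal{F}$ has $x^*\notin F_1$. Applying the previous analysis with $F_1$ in place of $F_0$ yields $y^*\in F_1$ (with $y^*\neq x^*$) such that $T\cup\{y^*\}\in\mathcal{F}$ for every $T\in\binom{[n]\setminus F_1}{k-1}$. Now choose a $(k-1)$-set $S_0\subset[n]\setminus(F_0\cup F_1)$; this is possible because intersecting-ness gives $|F_0\cup F_1|\le 2k-1$, and $n\ge 3k-1$ leaves at least $k$ elements outside. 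Then both $S_0\cup\{x^*\}$ and $S_0\cup\{y^*\}$ lie in $\mathcal{F}$, violating the uniqueness of $x_{S_0}$. The main subtlety is that the threshold $n\ge 3k-1$ must simultaneously enable (i) a disjoint $S_2$ inside $[n]\setminus F_0$, (ii) Kneser-connectivity of the relevant $(k-1)$-subsets, and (iii) a $(k-1)$-set outside $F_0\cup F_1$; all three kick in precisely at $n=3k-1$, which is what makes the bound $n\le 3k-2$ the right one.
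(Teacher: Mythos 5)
Your proof is correct, and while it runs on the same engine as the paper's --- a $(k-1)$-set disjoint from a fixed member of $\mathcal{F}$ extends to a member by adding a single element, which intersectingness then pins down --- the two arguments are organized differently at both ends. The paper identifies its ``apex'' immediately by choosing a difference representation $X=F_1\setminus F_2$, so that $F_1\cap F_2$ is a single element $1$ and every $(k-1)$-set disjoint from $F_1\cup F_2$ is forced to extend through $1$; you instead anchor on a single member $F_0$ and recover a common apex $x^*\in F_0$ via the disjoint-pair argument plus connectivity of the Kneser graph on $\binom{[n]\setminus F_0}{k-1}$, which costs you an extra (routine) connectivity step but needs no special choice of starting sets. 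The endgames then diverge more substantially: the paper bootstraps in two further stages to show that \emph{all} of $\mathcal{F}$ is a full star centered at $1$ and contradicts the hypothesis that every $(k-1)$-set (in particular one containing $1$) is a difference, whereas you stop short of a global structure result, observe that $\mathcal{F}$ cannot be a full star, extract a second member $F_1$ with $x^*\notin F_1$ and hence a second apex $y^*\ne x^*$, and contradict the uniqueness of the extension of a $(k-1)$-set chosen outside $F_0\cup F_1$. Your endgame is arguably cleaner, since it avoids the case analysis over where the $(k-1)$-set sits relative to $[2,2k-1]$ and $[2k,n]$; the paper's yields the stronger intermediate conclusion that any counterexample with $n\ge 3k-1$ would have to be a full star. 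All the cardinality checks in your write-up ($n-2k+1\ge k$, $n-2k\ge k-1$, $|F_0\cup F_1|\le 2k-1$) are correct and do all activate exactly at $n=3k-1$ as you note.
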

	
 Note that when $k=3$ and $n=3k-2=7$, the Fano plane $\mathcal{F}=\{123, 146, 157, 247, 367,\\ 265, 345\}$ on $[7]$ satisfies ${[7] \choose 2}\subset \mathcal{D}(\mathcal{F})$. Thus, the bound $n\le 3k-2$ in the above theorem is tight. 
	
We will prove  Theorems  \ref{main} and \ref{3k-2} in  the next section.
	
\section{Families with full differences}\label{2}

Let $k\ge 4$ be even. Throughout this section, let
 $$\mathcal{N}=\{0, 1, 2, \dots, 2k-1\}, $$
  $$A=\{x\in \mathcal{N}: x\equiv0 \text{(mod 3)}\},$$
	$$B=\{x\in \mathcal{N}: x\equiv1 \text{(mod 3)}\},$$
	and
	$$C=\{x\in \mathcal{N}: x\equiv2 \text{(mod 3)}\}.$$
	For $0\le i\le 2,$ let
	$$\mathcal{R}_i=\{\{x_1, x_2, \dots, x_k\}\in{\mathcal{N}\choose k}: \sum_{j=1}^k x_j\equiv i(mod \ 3)\}.$$

\begin{lemma}\label{claimR1} Let $\mathcal{R}_i$ be defined as above. Then

	(i) If $k\equiv 0$ (mod 3) or $k\equiv 2$ (mod 3), then	$\mathcal{R}_1$ is intersecting, and $\mathcal{R}_0$ and $\mathcal{R}_1$ are cross-intersecting.

(ii)If $k\equiv 1$ (mod 3), then $\mathcal{R}_0$ is intersecting, and $\mathcal{R}_0$ and $\mathcal{R}_2$ are cross-intersecting.
	\end{lemma}
	\noindent {\em Proof of Lemma \ref{claimR1}.} We only prove (i) and the proof of (ii) is very similar.  Note that $\sum_{t\in \mathcal{N}} t\equiv 0$(mod 3) if $k\equiv 0$ (mod 3) or $k\equiv 2$ (mod 3). Suppose on  the contrary that there exist $T_1\in \mathcal{R}_1$ and $T_2\in \mathcal{R}_1$ such that $T_1\cap T_2=\emptyset$, then $T_1\cup T_2=\mathcal{N}$ and $$\sum_{i\in\mathcal{N}}i=\sum_{i\in T_1\cup T_2}i=\sum_{i\in T_1}i+\sum_{i\in T_2}i\equiv 2\text{(mod 3)},$$ contradict to $\sum_{i\in\mathcal{N}}i\equiv 0 \text{(mod 3)}$. Therefore $\mathcal{R}_1$ is intersecting.
	
	Let $R_0\in \mathcal{R}_0$ and $R_1\in \mathcal{R}_1$ be such that $R_0\cap R_1=\emptyset$, then $R_0\cup R_1=\mathcal{N}$ and $$\sum_{i\in\mathcal{N}}i=\sum_{i\in R_0\cup R_1}i=\sum_{i\in R_0}i+\sum_{i\in R_1}i\equiv 1\text{(mod 3)},$$ contradict to $\sum_{i\in\mathcal{N}}i\equiv 0 \text{(mod 3)}$. Therefore $\mathcal{R}_0$ and $\mathcal{R}_1$ are cross-intersecting. \q

\subsection{Preliminary results and sketch of the proof} \label{prel}

To show  Theorem  \ref{main}, we need to construct $\mathcal{F}\subset{\mathcal{N}\choose k}$  such that $\mathcal{F}$ is intersecting and ${\mathcal{N}\choose i}\subset\mathcal{D}(\mathcal{F})$ for $1\le i\le k-1$. We need to prescribe what kind of $k$-subsets of $\mathcal{N}$ to be included in $\mathcal{F}$ such that $\mathcal{F}$ is intersecting and $\cup_{j=1}^{k-1}{\mathcal{N}\choose j}\subset \mathcal{D}(\mathcal{F})$.  We first focus on constructing an intersecting family satisfying the requirement ${\mathcal{N}\choose k-1}\subset \mathcal{D}(\mathcal{F})$. Then we show that our construction satisfies $\cup_{j=1}^{k-2}{\mathcal{N}\choose j}\subset \mathcal{D}(\mathcal{F})$ as well. Take any $X\in {\mathcal{N}\choose k-1}$. 
  Our aim is to construct intersecting $\mathcal{F}\subseteq {\mathcal{N}\choose k}$ such that $X\in \mathcal{D}(\mathcal{F})$ for all $X\in{\mathcal{N}\choose k-1}$. Let $Y=\mathcal{N}\setminus X$.
The following lemma gives us some idea for sets to be included in $\mathcal{F}$ such that all $(k-1)$-sets not containing any of $A, B$ or $C$ are in $\mathcal{D}(\mathcal{F})$. We will show the following lemma first.
\begin{lemma}\label{normal}
Let $X$ be a $(k-1)$-subset of $\mathcal{N}$ satisfying $A\not\subset X$, $B\not\subset X$ and $C\not\subset X$. If $(\sum_{t\in \mathcal{N}}t)$ mod 3$=j$, then $X\in \mathcal{D}(\mathcal{R}_i)$ for $i= (2-j)$ mod 3 and $i= (4-j)$ mod 3.
	\end{lemma}
	\noindent {\em Proof of Lemma \ref{normal}.} We aim to find $X', Y'\in \mathcal{R}_i$ such that $X=X'\setminus Y'$ for $i=(2-j)$mod 3 and $i=(1-j)$mod 3. Let $Y=\mathcal{N}\setminus X$. Since $A\not\subset X$, $B\not\subset X$ and $C\not\subset X$, we have $Y\cap A\not=\emptyset$, $Y\cap B\not=\emptyset$ and $Y\cap C\not=\emptyset$.
	
Suppose that  $\sum_{x\in X} x\equiv l$(mod 3), then $\sum_{y\in Y} y\equiv (j-l)$(mod 3).
Let $h=(i+j)$mod 3. Let $y'\neq y''\in Y$ such that $y'\equiv (i-l)$(mod 3) and $y''\equiv (y'+h)$(mod 3). Define $X'=X\cup\{y'\}$ and $Y'=Y\setminus\{y''\}$.  Then $$\sum_{x\in X'}x=\sum_{x\in X}x+y' \ \text{and} \ \sum_{x\in X}x+y'\equiv(l+(i-l))(\text{mod} \ 3)\equiv i(\text{mod} \ 3),$$	
$$\sum_{y\in Y'}y=\sum_{y\in Y}y-y'',$$ and
$$\sum_{y\in Y}y-y''\equiv((j-l)-(y'+h))(\text{mod} \ 3)\equiv((j-l)-(i-l+h))(\text{mod} \ 3)\equiv i(\text{mod} \  3)$$
and $X=X'\setminus Y'$. Thus $X\in \mathcal{D}(\mathcal{R}_i)$.\q

Let us sketch the idea for the case $k\equiv 0$ (mod 3) very roughly. We first put all members in $\mathcal{R}_1$ to $\mathcal{F}$, i.e. $\mathcal{R}_1\subset \mathcal{F}$. By Lemma \ref{normal}, $X\in \mathcal{D}(\mathcal{F})$ if $A\not\subset X$, $B\not\subset X$ and $C\not\subset X$. For other cases, we will add an intersecting subfamily $
\mathcal{R}^*_0\subseteq\mathcal{R}_0$ into $\mathcal {F}$ such that $\mathcal{F}=\mathcal{R}^*_0\cup \mathcal{R}_1$ is intersecting and ${\mathcal{N}\choose k-1}\subset\mathcal{D}(\mathcal{F})$. By Lemma \ref{claimR1}, $\mathcal{F}$ is intersecting provided $\mathcal{R}^*_0$ is intersecting. In order to construct $\mathcal{R}^*_0$ satisfying the requirements, we try to add a proper element $v$ into $X$, and delete a proper element $u$ from $Y=\mathcal{N}\setminus X$ such that $X=(X\cup\{v\})\setminus(Y\setminus\{u\})$. We need to guarantee that the family $\mathcal{R}^*_0$, formed by all such sets $X\cup\{v\}$ and $Y\setminus\{u\}$, is intersecting.
 For this purpose, we prove  three preliminary lemmas (Lemmas \ref{sum}, \ref{add} and \ref{delete}) to guide us how to add a proper element $v$ into $X$, and delete a proper element $u$ from $Y=\mathcal{N}\setminus X$. Before stating the lemmas, we need some definitions.

For a graph $G$, let $V(G)$,  $E(G)$ and $e(G)$ denote its vertex set, edge set and  the number of edges respectively. For a set $U\subset V(G)$, let $N(U)=\{v: v \ \text {is\ adjacent\ to\ some\ vertex}\ \\  u\in U\}$. For disjoint $U, V\subseteq V(G)$, $G[U, V]$ denotes the bipartite subgraph of $G$ induced by $U$ and $V$, i.e. $G[U,V]$ consists of all edges incident to one vertex in $U$ and one vertex in $V$.

Let $\mathcal{Z}_1=\mathcal{Z}_1(U_1, V_1)$ be the bipartite graph with $U_1=\{u_1, u_2, \dots, u_s\}$, $V_1=\{v_1, v_2, \dots, \\ v_s\}$, and $E(\mathcal{Z}_1)=\{u_iv_i: 1\le i\le s\}$. Let $\mathcal{Z}_2=\mathcal{Z}_2(U_2, V_2)$ be the bipartite graph with $U_2=\{u_1, u_2, \dots, u_s, u_{s+1}\}$, $V_2=\{v_1, v_2, \dots, v_s\}$, and $E(\mathcal{Z}_2)=\{u_iv_i: 1\le i\le s\}$. Let $\mathcal{Z}_3=\mathcal{Z}_3(U_3, V_3)$ be the bipartite graph with $U_3=\{u_1, u_2, \dots, u_s\}$, $V_3=\{v_1, v_2, \dots, v_s, v_{s+1}\}$, and $E(\mathcal{Z}_3)=\{u_iv_i: 1\le i\le s\}$.

\begin{lemma}\label{sum}
Let $i\in\{1, 2, 3\}$. Let $U_i=U\cup U'$ and $V_i=V\cup V'$ be partitions of $U_i$ and $V_i$ respectively. If $|U|+|V|-(|U\cap\{u_{s+1}\}|+|V\cap\{v_{s+1}\}|)\equiv 0$(mod 2), then $e(\mathcal{Z}_i[U, V])+e(\mathcal{Z}_i[U', V'])\equiv s$ (mod 2). If $|U|+|V|-(|U\cap\{u_{s+1}\}|+|V\cap\{v_{s+1}\}|)\equiv 1$(mod 2), then $e(\mathcal{Z}_i[U, V])+e(\mathcal{Z}_i[U', V'])\equiv (s+1)$ (mod 2).
	\end{lemma}
	\noindent {\em Proof of Lemma \ref{sum}.} We prove for $\mathcal{Z}_1$ first. Let $e(\mathcal{Z}_1[U, V])=x$, and $e(\mathcal{Z}_1[U', V'])=y$. Let $n_1=|\{t: t\in U\cup V, N(t)\notin U\cup V\}|$, then $n_1+2x=|U|+|V|$ and $s=x+y+n_1$. If $|U|+|V|\equiv 0$(mod 2), then $n_1\equiv 0$(mod 2). Therefore $s\equiv (x+y)$(mod 2). If $|U|+|V|\equiv 1$(mod 2), then $n_1\equiv 1$(mod 2). Therefore $s+1\equiv (x+y)$(mod 2).
	
For $\mathcal{Z}_2$, and $u_{s+1}\notin U$, it follows from $\mathcal{Z}_1$. If $u_{s+1}\in U$, since we have shown for $(\mathcal{Z}_1, U\setminus\{u_{s+1}\}, V)$, it holds for $(\mathcal{Z}_2, U, V)$.
	
For $\mathcal{Z}_3$, and $v_{s+1}\notin U$, it follows from $\mathcal{Z}_3$. If $v_{s+1}\in V$, since we have shown for $(\mathcal{Z}_1, U, V\setminus\{v_{s+1}\})$, it holds for $(\mathcal{Z}_3, U, V)$.
	\q
	
	\begin{lemma}\label{add}
		Let $i\in\{1, 2, 3\}$. Let $U\subset U_i$ and $V\subset V_i$ with $|U|+|V|\le s-1$. If $|U|-|U\cap\{u_{s+1}\}|\le |V|-|V\cap\{v_{s+1}\}|$, then there exists $v\in (V_i\setminus V)\setminus\{v_{s+1}\}$ such that $e(\mathcal{Z}_i[U, V\cup\{v\}])\equiv (|U|-|U\cap\{u_{s+1}\}|)$(mod 2). If $|U|-|U\cap\{u_{s+1}\}|>|V|-|V\cap\{v_{s+1}\}|$, then there exists $v\in (V_i\setminus V)\setminus\{v_{s+1}\}$ such that $e(\mathcal{Z}_i[U, V\cup\{v\}])\equiv (|V|-|V\cap\{v_{s+1}\}|)$(mod 2). Furthermore $N(V\cup\{v\})\not\subset U$.
	\end{lemma}
	\noindent {\em Proof of Lemma \ref{add}.} We prove it for $\mathcal{Z}_1$ first. Since $|U|+|V|\le s-1$, we have $|U|<|V_1\setminus V|$. So there exists $v'\in V_1\setminus V$ such that $N(v')\notin U$. We show the situation $|U|\le |V|$ first. If $e(\mathcal{Z}_1[U, V])\equiv |U|$(mod 2), then let $v=v'$. If $e(\mathcal{Z}_1[U, V])\not\equiv |U|$(mod 2), then there exists $u\in U$ such that $N(u)\notin V$. Let $v=N(u)$, then we are done. Next we show the situation $|U|>|V|$. Note that there exists $u\in U$ such that $N(u)\notin V$. If $e(\mathcal{Z}_1[U, V])\not\equiv |V|$(mod 2), then let $v=N(u)$. If $e(\mathcal{Z}_1[U, V])\equiv |V|$(mod 2), then let $v=v'$.
	
	For $\mathcal{Z}_2$, and $u_{s+1}\notin U$, it follows from $\mathcal{Z}_1$. If $u_{s+1}\in U$, since we have shown for $(\mathcal{Z}_1, U\setminus\{u_{s+1}\}, V)$, it holds for $(\mathcal{Z}_2, U, V)$.
	
	For $\mathcal{Z}_3$, and $v_{s+1}\notin V$, it follows from $\mathcal{Z}_1$. If $v_{s+1}\in V$, since we have shown for $(\mathcal{Z}_1, U, V\setminus\{v_{s+1}\})$, it holds for $(\mathcal{Z}_3, U, V)$.
	\q
	
	\begin{lemma}\label{delete}
	Let $i\in\{1, 2, 3\}$.	Let $U\subset U_i$ and $V\subset V_i$ with $|U|+|V|\ge s+2$ and $N(U)\not\subset V$. Then there exists $u\in U\setminus\{u_{s+1}\}$ such that $e(\mathcal{Z}_i[U\setminus\{u\}, V])\equiv 0$(mod 2) and there exists $u'\in U\setminus\{u_{s+1}\}$ such that $e(\mathcal{Z}_i[U\setminus\{u'\}, V])\equiv 1$(mod 2).
	\end{lemma}
	\noindent {\em Proof of Lemma \ref{delete}.} Since $|U|+|V|\ge s+2$, there exists $u''\in U\setminus\{u_{s+1}\}$ such that $v''=N(u'')\in V$. Since $N(U)\not\subset V$, there exists $u'''\in U\setminus\{u_{s+1}\}$ such that $v'''=N(u''')\notin V\cup\{v_{s+1}\}$. Without loss of generality, we may assume that $e(\mathcal{Z}_i[U, V])\equiv 0$(mod 2). Therefore $e(\mathcal{Z}_i[U\setminus\{u'''\}, V])\equiv 0$(mod 2) and $e(\mathcal{Z}_i[U\setminus\{u''\}, V])\equiv 1$(mod 2).
\q

\subsection{Proof of Theorem \ref{main}}

	We will  apply Lemmas \ref{normal}, \ref{sum}, \ref{add} and \ref{delete} to construct $\mathcal{F}$ such that $\mathcal{F}$ is intersecting and $\cup_{j=1}^{k-1}{\mathcal{N}\choose j}\subset \mathcal{D}(\mathcal{F})$. There are three cases  according to $i=0, 1,$ or $2,$ where $i\equiv k$(mod 3). The proofs are similar for three cases with some modifications. 

{\bf Case 1.  $k\equiv 0$ (mod 3).}

	Let $k=3p$ be even. In this case,
$$\mathcal{N}=\{0, 1, 2, \dots, 6p-1\},$$
 $$A=\{0, 3, 6, \dots, 6p-3\},$$
  $$B=\{1, 4, 7, \dots, 6p-2\}, $$
   $$C=\{2, 5, 8, \dots, 6p-1\}.$$

    We first focus on constructing an intersecting family satisfying the requirement ${\mathcal{N}\choose 3p-1}\subset \mathcal{D}(\mathcal{F})$. Then we show that our construction satisfies $\cup_{j=1}^{3p-2}{\mathcal{N}\choose j}\subset \mathcal{D}(\mathcal{F})$ as well. Take any $X\in {\mathcal{N}\choose 3p-1}$, let
$$|X\cap A|=a, \ |X\cap B|=b \ {\text {and}} \ |X\cap C|=c,$$ then $a+b+c=k-1=3p-1$. Our aim is to construct intersecting $\mathcal{F}\subseteq {\mathcal{N}\choose 3p}$ such that $X\in \mathcal{D}(\mathcal{F})$ for all $X\in{\mathcal{N}\choose 3p-1}$. Let $Y=\mathcal{N}\setminus X$. Note that $\sum_{t\in \mathcal{N}} t\equiv 0$(mod 3) and $\sum_{x\in X} x\equiv b+2c$(mod 3). 

We first put all members in $\mathcal{R}_1$ to $\mathcal{F}$, i.e. $\mathcal{R}_1\subset \mathcal{F}$. By Lemma \ref{normal}, if $a, b, c<2p$, then $X\in \mathcal{D}(\mathcal{F})$. For the situation that $a=2p$ or $b=2p$ or $c=2p$, we will add an intersecting subfamily $
\mathcal{R}^*_0\subseteq\mathcal{R}_0$ into $\mathcal {F}$ such that $\mathcal{F}=\mathcal{R}^*_0\cup \mathcal{R}_1$ is intersecting and ${\mathcal{N}\choose 3p-1}\subset\mathcal{D}(\mathcal{F})$. By Lemma \ref{claimR1}, $\mathcal{F}$ is intersecting provided $\mathcal{R}^*_0$ is intersecting. In order to construct $\mathcal{R}^*_0$ satisfying the requirements, we try to add a proper element $v$ into $X$, and delete a proper element $u$ from $Y=\mathcal{N}\setminus X$ such that $X=(X\cup\{v\})\setminus(Y\setminus\{u\})$. We need to guarantee that the family $\mathcal{R}^*_0$, formed by all such sets $X\cup\{v\}$ and $Y\setminus\{u\}$, is intersecting. 

We construct an auxiliary tripartite graph $G$ with partition $A\cup B\cup C$ and let $\{\alpha, \alpha+1, \alpha+2\}$ form a $K_3$ for each $\alpha\in A$. Therefore $G[A\cup B], G[A\cup C]$ and $G[B\cup C]$ are perfect matchings. Note that $|A|=|B|=|C|$, $|A|\equiv 0$(mod 2) and $p\equiv 0$(mod 2).
	
 Let us consider the case $a=2p$ first. Note that $\sum_{x\in X} x\equiv b+2c$ (mod 3). If $b+2c\equiv 0$(mod 3), then $\sum_{y\in Y} y\equiv 0$(mod 3). Let $\beta\in Y\cap B$ and $\gamma\in Y\cap C$. Then $X\cup\{\beta\}, Y\setminus\{\gamma\}\in \mathcal{R}_1\subset \mathcal{F}$ and $X=(X\cup\{\beta\})\setminus(Y\setminus\{\gamma\})\in\mathcal{D}(\mathcal{F})$. In this case, we are fine.
	
	For the case $a=2p$ and $b+2c\equiv 1$(mod 3), applying Lemma \ref{add} by taking $s=2p$, $U_1=B$, $V_1=C$, $U=X\cap B$ and $V=X\cap C$, we have the following claim.
	
	\begin{claim}\label{case1.1add}
		Suppose that $a=2p$ and $b+2c\equiv 1$(mod 3). If $b\le c$, then there exists a vertex $\gamma'\in Y\cap C$ such that
		$e(G[X\cap B, (X\cap C)\cup\{\gamma'\}])\equiv b$(mod 2). If $b>c$, then there exists a vertex $\gamma'\in Y\cap C$ such that $e(G[X\cap B, (X\cap C)\cup\{\gamma'\}])\equiv c$(mod 2). Furthermore $N((X\cap C)\cup\{\gamma'\})\not\subset X\cap B.$
	\end{claim}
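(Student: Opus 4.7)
The plan is to read Claim \ref{case1.1add} as a direct instance of Lemma \ref{add} applied to the graph $\mathcal{Z}_1$, so the work reduces to verifying that the hypotheses of that lemma are met in this setting.

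First, I would identify the matching structure. By construction of $G$, the edges of $G[B\cup C]$ are exactly $\{\alpha+1,\alpha+2\}$ for $\alpha\in A$, so $G[B\cup C]$ is a perfect matching between $B$ and $C$ of size $|B|=|C|=2p$. Writing $U_1=B$, $V_1=C$, and $s=2p$, this matches $\mathcal{Z}_1$ exactly (there is no extra vertex $u_{s+1}$ or $v_{s+1}$, so the correction terms $|U\cap\{u_{s+1}\}|$ and $|V\cap\{v_{s+1}\}|$ in Lemma \ref{add} both vanish).

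Next, I would take $U=X\cap B$ and $V=X\cap C$, so $|U|=b$ and $|V|=c$. Since $a=2p=|A|$ forces $X\cap A=A$, we get $b+c=(k-1)-a=3p-1-2p=p-1$, hence $|U|+|V|=p-1\le 2p-1=s-1$, which is the size hypothesis of Lemma \ref{add}. The parity assumption $b+2c\equiv 1\pmod 3$ appears in the case decomposition of the larger argument but plays no role in the proof of the claim itself.

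Finally, I would invoke Lemma \ref{add} with $i=1$. In the case $b\le c$ (i.e.\ $|U|\le |V|$) it produces $v\in V_1\setminus V=C\setminus(X\cap C)=Y\cap C$ with $e(G[U,V\cup\{v\}])\equiv |U|\equiv b\pmod 2$; in the case $b>c$ it produces such a $v$ with $e(G[U,V\cup\{v\}])\equiv c\pmod 2$; and in both cases it also guarantees $N(V\cup\{v\})\not\subset U$. Setting $\gamma':=v$ gives precisely the conclusion of Claim \ref{case1.1add}. I do not expect any real obstacle: the claim is essentially a dictionary translation of Lemma \ref{add} into the perfect-matching language of $G[B\cup C]$, and the only substantive verification is the inequality $b+c=p-1\le 2p-1$, which is automatic.
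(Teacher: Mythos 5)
Your proposal is correct and matches the paper exactly: the paper derives Claim \ref{case1.1add} as a direct application of Lemma \ref{add} with $s=2p$, $U_1=B$, $V_1=C$, $U=X\cap B$, $V=X\cap C$, which is precisely your translation. Your added verification that $|U|+|V|=b+c=p-1\le 2p-1=s-1$ (using $a=2p$, so $A\subset X$) is the only hypothesis that needs checking, and you check it correctly.
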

	
Let us continue the case $a=2p$ and $b+2c\equiv 1$(mod 3). Take $\gamma'$ as in Claim \ref{case1.1add}, and let ${\mathcal{R}_{0a}^1}$ be the family of all such $X\cup\{\gamma'\}$. Note that $X\cup\{\gamma'\}\in \mathcal{R}_{0}$, therefore $\mathcal{R}_{0a}^1\subset \mathcal{R}_{0}$. Take $\beta'\in Y\cap B$, since $\sum_{y\in Y} y\equiv 2$(mod 3), we have $Y\setminus\{\beta'\}\in \mathcal{R}_1$. Now $X=(X\cup\{\gamma'\})\setminus(Y\setminus\{\beta'\})\in \mathcal{D}(\mathcal{F}).$ Clearly $\mathcal{R}_{0a}^1$ is intersecting since each $k$-set in $\mathcal{R}_{0a}^1$ contains $A$.
	
For the case $a=2p$ and $b+2c\equiv 2$ (mod 3), applying Lemma \ref{delete} by taking $s=2p$, $U_1=B$, $V_1=C$, $U=Y\cap B$ and $V=Y\cap C$, we have the following claim.
	
\begin{claim}\label{case1.1delete}
Suppose that $a=2p$, $b+2c\equiv 2$(mod 3) and $N(Y\cap B)\not\subset Y\cap C$. If $b\le c-2$, then there exists $\beta''\in Y\cap B$ such that $e(G[(Y\cap B)\setminus\{\beta''\}, Y\cap C])\equiv b$(mod 2). If $b>c-2$, then there exists $\beta''\in Y\cap B$ such that $e(G[(Y\cap B)\setminus\{\beta''\}, Y\cap C])\equiv c$(mod 2).
\end{claim}
	
Let us continue the case $a=2p$ and $b+2c\equiv 2$(mod 3). Take $\gamma''\in Y\cap C$, note that $X\cup\{\gamma''\}\in\mathcal{R}_1$. If $N(Y\cap B)\not\subset Y\cap C$, then take $\beta''$ as in Claim \ref{case1.1delete}. If $N(Y\cap B)\subset Y\cap C$, then take any fixed $\beta''\in Y\cap B$. Let ${\mathcal{R}_{0a}^2}$ be the family of all such $Y\cup\{\beta''\}$, note that $Y\cup\{\beta''\}\in \mathcal{R}_{0}$, therefore $\mathcal{R}_{0a}^2\subset \mathcal{R}_{0}$.
So $X=(X\cup\{\gamma''\})\setminus(Y\setminus\{\beta''\})\in \mathcal{D}(\mathcal{F})$. Clearly $\mathcal{R}_{0a}^2$ is intersecting since for any $R_0^1$ and $R_0^2$ in $\mathcal{R}_{0a}^2$ we have $|R_0^1|=|R_0^2|=3p$ and $R_0^1\cup R_0^2\subset B\cup C$, then by the inclusion-exclusion principle $|R_0^1\cap R_0^2|=|R_0^1|+|R_0^2|-|R_0^1\cup R_0^2|\ge 2p$.
	
\begin{claim}\label{case1-r0aintersecting}
Let $\mathcal{R}_{0a}^*=\mathcal{R}_{0a}^1\cup\mathcal{R}_{0a}^2$, then $\mathcal{R}_{0a}^*$ is intersecting.
\end{claim}
\noindent {\em Proof of Claim \ref{case1-r0aintersecting}.} Since we have explained that $\mathcal{R}_{0a}^1$ is intersecting and $\mathcal{R}_{0a}^2$ is intersecting, it is sufficient to show that $\mathcal{R}_{0a}^1$ and $\mathcal{R}_{0a}^2$ are cross-intersecting. Suppose that there exist $R_1\in \mathcal{R}_{0a}^1$ and $R_2\in \mathcal{R}_{0a}^2$ such that $R_1\cap R_2=\emptyset.$ If $N(R_2\cap B)\subset R_2\cap C$, then $N(R_1\cap C)\subset R_1\cap B$, a contradiction to Claim \ref{case1.1add}. So $N(R_2\cap B)\not\subset R_2\cap C$. Let
\begin{align}\label{eq1}
|R_1\cap B|=b'\quad \text {and} \quad |R_1\cap C|-1=c',
\end{align}
then $b'+2c'\equiv 1$(mod 3) by the definition of $\mathcal{R}_{0a}^1$, $|R_2\cap B|=2p-|R_1\cap B|=2p-b'$ and $|R_2\cap C|=2p-|R_1\cap C|=2p-c'-1$.
By Claim \ref{case1.1add}, we have
\begin{align}\label{eq2}
e(G[R_1\cap B, R_1\cap C])\equiv b' \ \text {(mod 2)} \quad \text {if} \quad b'\le |R_1\cap C|-1=c',
\end{align}
and
\begin{align}\label{eq3}
e(G[R_1\cap B, R_1\cap C])\equiv |R_1\cap C|-1=c' \ \text {(mod 2)} \quad \text {if} \quad  b'>|R_1\cap C|-1=c'.
\end{align}
By Claim \ref{case1.1delete}, $e(G[R_2\cap B, R_2\cap C])\equiv (2p-|R_2\cap B|-1)(mod 2)=b'-1$(mod 2) if $2p-|R_2\cap B|-1\le 2p-|R_2\cap C|-2$, i.e.
\begin{align}\label{eq4}
e(G[R_2\cap B, R_2\cap C])\equiv (b'-1) \  \text {(mod 2)}\quad\text {if} \quad b'\le c',
\end{align}
and $e(G[R_2\cap B, R_2\cap C])\equiv 2p-|R_2\cap C|=c'+1$(mod 2) if $2p-|R_2\cap B|-1>2p-|R_2\cap C|-2$, i.e.
\begin{align}\label{eq5}
e(G[R_2\cap B, R_2\cap C])\equiv c'+1 \ \text{(mod 2)}\quad \text{if}\quad b'>c'.
\end{align}
Combining (\ref{eq2}) and (\ref{eq4}), (\ref{eq3}) and (\ref{eq5}), we have $e(G[R_1\cap B, R_1\cap C])+e(G[R_2\cap B, R_2\cap C])\equiv 1$(mod 2). Since $p$ is even, $|R_1\cap B|+|R_1\cap C|=p$ is even. By Lemma \ref{sum}, $e(G[R_1\cap B, R_1\cap C])+e(G[R_2\cap B, R_2\cap C])$ is even, a contradiction.\q

So far, we have shown that if $a, b, c<2p$ or $a=2p$, we have obtained an intersecting family $\mathcal{R}_{1}\cup\mathcal{R}_{0a}^*$ such that $X\in \mathcal{D}(\mathcal{R}_{1}\cup\mathcal{R}_{0a}^*)$. For the case $b=2p$ (or $c=2p$), we will construct $\mathcal{R}_{0b}^*$ (or $\mathcal{R}_{0c}^*$) such that $\mathcal{R}_{1}\cup\mathcal{R}_{0b}^*$ is intersecting (or $\mathcal{R}_{1}\cup\mathcal{R}_{0c}^*$ is intersecting) and $X\in \mathcal{D}(\mathcal{R}_{1}\cup\mathcal{R}_{0b}^*)$ (or $X\in \mathcal{D}(\mathcal{R}_{1}\cup\mathcal{R}_{0a}^*)$).

Now, we consider $b=2p$. Note that $\sum_{x\in X} x\equiv 2p+2c$ (mod 3). If $2p+2c\equiv 2$(mod 3), then $\sum_{y\in Y} y\equiv 1$(mod 3). Let $\alpha\in Y\cap A$ and $\gamma \in Y\cap C$. Then $X\cup\{\gamma\}, Y\setminus\{\alpha\}\in \mathcal{R}_1\subset \mathcal{F}$ and $X=(X\cup\{\gamma\})\setminus(Y\setminus\{\alpha\})\in\mathcal{D}(\mathcal{F})$. In this case, we are fine.
	
For the case $b=2p$ and $2p+2c\equiv 0$ (mod 3), applying Lemma \ref{add} by taking $s=2p$, $U_1=C$, $V_1=A$, $U=X\cap C$ and $V=X\cap A$ (note that $|U|+|V|=p\le s-1$), we have the following claim.
	
\begin{claim}\label{case1.2add}
Suppose that $b=2p$ and $2p+2c\equiv 0$(mod 3). If $c\le a$, then there exists a vertex $\alpha'\in Y\cap A$ such that $e(G[(X\cap A)\cup\{\alpha'\}, X\cap C])\equiv c$(mod 2). If $c>a$, then there exists a vertex $\alpha'\in Y\cap A$ such that $e(G[(X\cap A)\cup\{\alpha'\}, X\cap C])\equiv a$(mod 2). Furthermore $N((X\cap A)\cup\{\alpha'\})\not\subset X\cap C.$
\end{claim}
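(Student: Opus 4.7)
The plan is to read this claim as the direct specialization of Lemma \ref{add} to the bipartite matching $\mathcal{Z}_1 = G[C,A]$, which by construction of the auxiliary graph $G$ is a perfect matching between $C$ and $A$. I would take the parameters exactly as indicated in the sentence just before the claim: $s = 2p$, $U_1 = C$, $V_1 = A$, $U = X\cap C$ and $V = X\cap A$. The graph $\mathcal{Z}_1$ here has no distinguished vertex $u_{s+1}$ or $v_{s+1}$, so the correction terms $|U\cap\{u_{s+1}\}|$ and $|V\cap\{v_{s+1}\}|$ that appear in Lemma \ref{add} are simply zero.

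The single hypothesis to verify is $|U|+|V|\le s-1$: from $b=2p$ and $a+b+c=k-1=3p-1$ we get $a+c=p-1\le 2p-1=s-1$. Lemma \ref{add} then produces a vertex $\alpha' \in V_1\setminus V = Y\cap A$ whose parity behavior matches the two cases of the claim: $c\le a$ (i.e.\ $|U|\le |V|$) yields $e(\mathcal{Z}_1[U,V\cup\{\alpha'\}]) \equiv c$ (mod $2$), and $c>a$ yields $\equiv a$ (mod $2$). Since $G$ is undirected, this edge count equals $e(G[(X\cap A)\cup\{\alpha'\},X\cap C])$, which is precisely what the claim asks for. The final assertion $N((X\cap A)\cup\{\alpha'\})\not\subset X\cap C$ is exactly the ``Furthermore'' conclusion $N(V\cup\{v\})\not\subset U$ of Lemma \ref{add}.

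Since the claim is essentially a translation of Lemma \ref{add}, the main (and only) obstacle is matching the roles of $A, B, C$ to those of $U_1, V_1$ correctly and being careful about which side parameterizes $|U|$ in the lemma, so that the two cases $c\le a$ and $c>a$ get assigned the right parities (this is where one can easily slip up by swapping $U$ and $V$). The congruence hypotheses $b=2p$ and $2p+2c\equiv 0$ (mod $3$) play no role in the application of Lemma \ref{add} itself; they only serve to guarantee that the modified $k$-set $X\cup\{\alpha'\}$ lands in $\mathcal{R}_0$ when it is later added to $\mathcal{R}^*_0$ in the overall construction of $\mathcal{F}$, and will be used when proving cross-intersection of $\mathcal{R}_{0b}^1$ with $\mathcal{R}_{0b}^2$ via Lemma \ref{sum} exactly as in the $a=2p$ case.
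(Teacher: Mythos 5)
Your proposal is correct and is exactly the paper's argument: the paper proves Claim \ref{case1.2add} precisely by invoking Lemma \ref{add} with $s=2p$, $U_1=C$, $V_1=A$, $U=X\cap C$, $V=X\cap A$, and your verification of the hypothesis $|U|+|V|=a+c=p-1\le 2p-1$ and the translation of the two parity cases (and of $V_1\setminus V=Y\cap A$) are all accurate. Your closing observation that the congruence conditions only matter for placing $X\cup\{\alpha'\}$ in $\mathcal{R}_0$ later is also consistent with how the paper uses the claim.
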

	
	Let us continue the case $b=2p$ and $2p+2c\equiv 0$(mod 3). Take $\alpha'$ as in Claim \ref{case1.2add} and let $\mathcal{R}_{0b}^1$ be the family consisting of all such $X\cup\{\alpha'\}$. Note that $\mathcal{R}_{0b}^1\subset\mathcal{R}_{0b}^*.$ Note that $\sum_{y\in Y} y\equiv 0$(mod 3), take $\gamma'\in Y\cap C$, then $Y\setminus\{\gamma'\}\in \mathcal{R}_1$ and $X=(X\cup\{\alpha'\})\setminus(Y\setminus\{\gamma'\})\in \mathcal{D}(\mathcal{F}).$ Clearly $\mathcal{R}_{0b}^1$ is intersecting since each $k$-set in $\mathcal{R}_{0b}^1$ contains $B$.
	
	For the case $b=2p$ and $2p+2c\equiv 1$ (mod 3), applying Lemma \ref{delete} by taking $s=2p$, $U_1=C$, $V_1=A$, $U=Y\cap C$ and $V=Y\cap A$, we have the following claim.
	
\begin{claim}\label{case1.2delete}
Suppose that $b=2p$, $2p+2c\equiv 1$(mod 3). Let $N(Y\cap C)\not\subset Y\cap A$. If $c\le a-2$, then there exists $\gamma''\in Y\cap C$ such that $e(G[Y\cap A, (Y\cap C)\setminus\{\gamma''\}])\equiv c$(mod 2). If $c>a-2$, then there exists $\gamma''\in Y\cap C$ such that $e(G[Y\cap A, (Y\cap C)\setminus\{\gamma''\}]\equiv a$(mod 2) .
\end{claim}
	
Let us continue the case $b=2p$ and $2p+2c\equiv 1$(mod 3). Note that for $\alpha''\in Y\cap A$ $X\cup\{\alpha''\}\in\mathcal{R}_1$. If $N(Y\cap C)\not\subset Y\cap A$, then take $\gamma''$ as in Claim \ref{case1.2delete}. If $N(Y\cap C)\subset Y\cap A$, then take any fixed $\gamma''$ in $Y\cap C$. Let $\mathcal{R}_{0b}^2$ be the family of all such $Y\setminus\{\gamma''\}$. Note that $\mathcal{R}_{0b}^2\subset\mathcal{R}_{0b}^*$ and $X=(X\cup\{\alpha''\})\setminus(Y\setminus\{\gamma''\})\in \mathcal{D}(\mathcal{F})$. Clearly $\mathcal{R}_{0b}^2$ is intersecting since for any $R_0^1$ and $R_0^2$ in $\mathcal{R}_{0b}^2$ we have $|R_0^1|=|R_0^2|=3p$ and $R_0^1\cup R_0^2\subset A\cup C$, then by the inclusion-exclusion principle $|R_0^1\cap R_0^2|=|R_0^1|+|R_0^2|-|R_0^1\cup R_0^2|\ge 2p$.
	
\begin{claim}\label{case1-r0bintersecting}
Let $\mathcal{R}_{0b}^*=\mathcal{R}_{0b}^1\cup\mathcal{R}_{0b}^2$, then $\mathcal{R}_{0b}^*$ is intersecting.
\end{claim}
\noindent {\em Proof of Claim \ref{case1-r0bintersecting}.} Since we have explained that $\mathcal{R}_{0b}^1$ is intersecting and $\mathcal{R}_{0b}^2$ is intersecting, it is sufficient to show that $\mathcal{R}_{0b}^1$ and $\mathcal{R}_{0b}^2$ are cross-intersecting. Suppose that there exist $R_1\in \mathcal{R}_{0b}^1$ and $R_2\in \mathcal{R}_{0b}^2$ such that $R_1\cap R_2=\emptyset$. If $N(R_2\cap C)\subset R_2\cap A$, then $N(R_1\cap A)\subset R_1\cap C$, a contradiction to Claim \ref{case1.2add}. So we may assume that $N(R_2\cap C)\not\subset R_2\cap A$.
	Let $c'=|R_1\cap C|$ and $a'=|R_1\cap A|-1$, then $|R_2\cap c|=2p-|R_1\cap C|=2p-c'$ and $|R_2\cap A|=2p-|R_1\cap A|=2p-a'-1$. By Claim \ref{case1.2add}, $e(G[R_1\cap A, R_1\cap C])\equiv |R_1\cap C|=c'$(mod 2) if $|R_1\cap C|\le |R_1\cap A|-1$, i.e. $e(G[R_1\cap A, R_1\cap C])\equiv c'$(mod 2) if $c'\le a'$,
	and $e(G[R_1\cap A, R_1\cap C])\equiv |R_1\cap A|-1=a'$(mod 2) if $|R_1\cap C|>|R_1\cap A|-1$, i.e. $e(G[R_1\cap A, R_1\cap C])\equiv a'$(mod 2) if $c'>a'$.
	By Claim \ref{case1.2delete}, $e(G[R_2\cap A, R_2\cap C])\equiv 2p-|R_2\cap C|-1=c'-1$(mod 2) if $2p-|R_2\cap C|-1\le 2p-|R_2\cap A|-2$, i.e. $e(G[R_2\cap A, R_2\cap C])\equiv c'-1$(mod 2) if $c'\le a'$,
	and $e(G[R_2\cap A, R_2\cap C])\equiv 2p-|R_2\cap A|=a'+1$(mod 2) if $2p-|R_2\cap C|-1>2p-|R_2\cap A|-2$, i.e. $e(G[R_2\cap A, R_2\cap C])\equiv a'+1$(mod 2) if $c'>a'$.
	So $e(G[R_1\cap A, R_1\cap C])+e(G[R_2\cap A, R_2\cap C])\equiv 1$(mod 2). Since $p$ is even, $|R_1\cap A|+|R_1\cap C|=p$ is even. By Lemma \ref{sum}, $e(G[R_1\cap A, R_1\cap C])+e(G[R_2\cap A, R_2\cap C])$ is even, a contradiction.\q
	
\begin{claim}\label{case1-r0ar0bintersecting}
$\mathcal{R}_{0a}^*$ and $\mathcal{R}_{0b}^*$ are cross-intersecting.
\end{claim}
\noindent {\em Proof of Claim \ref{case1-r0ar0bintersecting}.}Recall that $\mathcal{R}^*_{0a}=\mathcal{R}^1_{0a}\cup\mathcal{R}^2_{0a}$, $\mathcal{R}^*_{0b}=\mathcal{R}^1_{0b}\cup\mathcal{R}^2_{0b}$, and every member in $\mathcal{R}^1_{0a}$ contains all elements in $A$ and at least one element in $C$, every member in $\mathcal{R}^2_{0a}$ consists of some elements  but not all elements in $B$ and some elements in $C$, every member in $\mathcal{R}^1_{0b}$ contains all elements in $B$ and at least one element in $A$, every member in $\mathcal{R}^2_{0b}$ consists of some elements but not all elements in $C$ and some elements in $A$. Clearly, $\mathcal{R}^1_{0a}$ and $\mathcal{R}^1_{0b}$ are cross-intersecting, $\mathcal{R}^1_{0a}$ and $\mathcal{R}^2_{0b}$ are cross-intersecting and $\mathcal{R}^2_{0a}$ and $\mathcal{R}^1_{0b}$ are cross-intersecting. Every member in $\mathcal{R}^2_{0a}$ contains at least $p+1$ elements in $C$, and every member in $\mathcal{R}^2_{0b}$ contains at least $p$ elements in $C$, so they must intersect in $C$. Therefore $\mathcal{R}_{0a}^*$ and $\mathcal{R}_{0b}^*$ are cross-intersecting.\q
	
Now we consider $c=2p$. Note that $\sum_{x\in X} x\equiv b+4p$ (mod 3). If $b+4p\equiv 1$(mod 3), then $\sum_{y\in Y} y\equiv 2$(mod 3). Let $\alpha\in Y\cap A$ and $\beta \in Y\cap B$. Then $X\cup\{\alpha\}, Y\setminus\{\beta\}\in \mathcal{R}_1\subset \mathcal{F}$ and $X=(X\cup\{\alpha\})\setminus(Y\setminus\{\beta\})\in\mathcal{D}(\mathcal{F})$. We are fine.
	
	For the case $c=2p$ and $b+4p\equiv 2$ (mod 3), applying Lemma \ref{add} by taking $s=2p$, $U_1=A$, $V_1=B$, $U=X\cap A$ and $V=X\cap B$, we have the following claim.
	
	\begin{claim}\label{case1.3add}
		Suppose that $c=2p$ and $b+4p\equiv 2$(mod 3). If $a\le b$, then there exists a vertex $\beta'\in Y\cap B$ such that
		$e(G[X\cap A, (X\cap B)\cup\{\beta'\}])\equiv a$(mod 2). If $a>b$, then there exists a vertex $\beta'\in Y\cap B$ such that
		$e(G[X\cap A, (X\cap B)\cup\{\beta'\}])\equiv b$(mod 2). Furthermore $N((X\cap B)\cup\{\beta'\})\not\subset X\cap A.$
	\end{claim}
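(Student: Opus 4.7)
The plan is to derive Claim~\ref{case1.3add} as a direct instantiation of Lemma~\ref{add}, in complete parallel to the way Claim~\ref{case1.1add} and Claim~\ref{case1.2add} were obtained for the cases $a=2p$ and $b=2p$. The auxiliary graph $G$ was constructed so that each triple $\{\alpha,\alpha+1,\alpha+2\}$ with $\alpha\in A$ forms a triangle, and hence $G[A\cup B]$ consists precisely of the edges $\{\alpha,\alpha+1\}$ for $\alpha\in A$, which is a perfect matching between $A$ and $B$. Identifying this matching with the graph $\mathcal{Z}_1$ of Lemma~\ref{add} by setting $s=2p$, $U_1=A$, and $V_1=B$ reduces the claim to a single invocation of that lemma.

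To apply it I would take $U=X\cap A$ and $V=X\cap B$, so that $|U|=a$ and $|V|=b$. Since $|X|=k-1=3p-1$ and $c=|X\cap C|=2p$, we have $a+b=p-1\le 2p-1=s-1$, so the cardinality hypothesis of Lemma~\ref{add} is satisfied. Because $\mathcal{Z}_1$ has no ``extra'' vertex $u_{s+1}$ or $v_{s+1}$, the two dichotomous conditions in Lemma~\ref{add} collapse to $a\le b$ versus $a>b$. In the former case the lemma produces $\beta'\in V_1\setminus V=Y\cap B$ with $e(G[X\cap A,(X\cap B)\cup\{\beta'\}])\equiv a\pmod 2$; in the latter case it gives such a $\beta'$ with the congruence $\equiv b\pmod 2$. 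The ``furthermore'' clause of Lemma~\ref{add}, namely $N(V\cup\{v\})\not\subset U$, translates verbatim to $N((X\cap B)\cup\{\beta'\})\not\subset X\cap A$, which is the last assertion of the claim.

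No serious obstacle is expected. The arithmetic hypothesis $c=2p$ enters only through the identity $a+b=p-1$, which is what ensures $a+b\le s-1$; the modular condition $b+4p\equiv 2\pmod 3$ plays no role inside the claim itself and will instead be used \emph{after} the claim, to guarantee that the resulting sets $X\cup\{\alpha\}$ and $Y\setminus\{\beta'\}$ land in the correct residue classes modulo~$3$, following the pattern already established for the cases $a=2p$ and $b=2p$.
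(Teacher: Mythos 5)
Your proposal is correct and matches the paper's own (one-line) justification: the claim is obtained by instantiating Lemma~\ref{add} with $s=2p$, $U_1=A$, $V_1=B$, $U=X\cap A$, $V=X\cap B$, using $a+b=p-1\le s-1$ and the identification of $G[A\cup B]$ with the matching $\mathcal{Z}_1$. (In fact the paper's text writes ``$U=Y\cap A$ and $V=Y\cap B$'' here, which appears to be a typo carried over from the delete case; your choice $U=X\cap A$, $V=X\cap B$ is the one consistent with the claim's statement.)
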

	
Let us continue the case $c=2p$ and $b+4p\equiv 2$(mod 3). Take $\beta'$ as in Claim \ref{case1.3add} and let $\mathcal{R}_{0c}^1$ be the family of all such $X\cup\{\beta'\}$. Note that $\mathcal{R}_{0c}^1\subset\mathcal{R}_{0c}^*$. Note that $\sum_{y\in Y} y\equiv 1$(mod 3), take $\alpha'\in Y\cap A$, then $Y\setminus\{\alpha'\}\in \mathcal{R}_1$ and $X=(X\cup\{\beta'\})\setminus(Y\setminus\{\alpha'\})\in \mathcal{D}(\mathcal{F}).$ Clearly $\mathcal{R}_{0c}^1$ is intersecting since each $k$-set in $\mathcal{R}_{0c}^1$ contains $C$.
	
	For the case $c=2p$ and $b+4p\equiv 0$ (mod 3), applying Lemma \ref{delete} by taking $s=2p$, $U_1=A$, $V_1=B$, $U=Y\cap A$ and $V=Y\cap B$, we have the following claim.
	
\begin{claim}\label{case1.3delete}
Suppose that $c=2p$, $b+4p\equiv 0$(mod 3). Let $N(Y\cap A)\not\subset Y\cap B$. If $a\le b-2$, then there exists $\alpha''\in Y\cap A$ such that $e(G[(Y\cap A)\setminus\{\alpha''\}, Y\cap B])\equiv a$(mod 2). If $a>b-2$, then there exists $\alpha''\in Y\cap A$ such that $e(G[(Y\cap A)\setminus\{\alpha''\}, Y\cap B])\equiv b$(mod 2).
\end{claim}
	
Let us continue the case $c=2p$ and $b+4p\equiv 0$(mod 3). Note that $X\cup\{\beta''\}\in\mathcal{R}_1$ for $\beta''\in Y\cap B$. If $N(Y\cap A)\not\subset Y\cap B$, then take $\alpha''$ as in Claim \ref{case1.3delete}. If $N(Y\cap A)\subset Y\cap B$, then take any fixed $\alpha''$ in $Y\cap A$. Let $\mathcal{R}_{0c}^2$ be the family of all such $Y\setminus\{\alpha''\}$. Note that $\mathcal{R}_{0c}^2\subset\mathcal{R}_{0c}^*$
and $X=(X\cup\{\beta''\})\setminus(Y\setminus\{\alpha''\})\in \mathcal{D}(\mathcal{F})$.
Clearly $\mathcal{R}_{0c}^2$ is intersecting since for any $R_0^1$ and $R_0^2$ in $\mathcal{R}_{0c}^2$ we have $|R_0^1|=|R_0^2|=3p$ and $R_0^1\cup R_0^2\subset A\cup B$, then $|R_0^1\cap R_0^2|\ge 2p$.
	
\begin{claim}\label{case1-r0cintersecting}
Let $\mathcal{R}_{0c}^*=\mathcal{R}_{0c}^1\cup\mathcal{R}_{0c}^2$, then $\mathcal{R}_{0c}^*$ is intersecting. \end{claim}
\noindent {\em Proof of Claim \ref{case1-r0cintersecting}.} Since we have explained that $\mathcal{R}_{0c}^1$ is intersecting and $\mathcal{R}_{0c}^2$ is intersecting, it is sufficient to show that $\mathcal{R}_{0c}^1$ and $\mathcal{R}_{0c}^2$ are cross-intersecting.
	Suppose that there exist $R_1\in \mathcal{R}_{0c}^1$ and $R_2\in \mathcal{R}_{0c}^2$ such that $R_1\cap R_2=\emptyset$. If $N(R_2 \cap A)\subset R_2\cap B$, then $N(R_1 \cap B)\subset R_1\cap A$, a contradiction to Claim \ref{case1.3add}. Let $a'=|R_1\cap A|$ and $b'=|R_1\cap B|-1$, then $|R_2\cap A|=2p-|R_1\cap A|=2p-a'$ and $|R_2\cap B|=2p-|R_1\cap B|=2p-b'-1$.
	By Claim \ref{case1.3add}, $e(G[R_1\cap A, R_1\cap B])\equiv |R_1\cap A|=a'$(mod 2) if $|R_1\cap A|\le |R_1\cap B|-1$, i.e. $e(G[R_1\cap A, R_1\cap B])\equiv a'$(mod 2) if $a'\le b'$, and $e(G[R_1\cap A, R_1\cap B])\equiv |R_1\cap B|-1=b'$(mod 2) if $|R_1\cap A|>|R_1\cap B|-1$, i.e. $e(G[R_1\cap A, R_1\cap B])\equiv b'$(mod 2) if $a'>b'$. By Claim \ref{case1.3delete}, $e(G[R_2\cap A, R_2\cap B])\equiv 2p-|R_2\cap A|-1=a'-1$(mod 2) if $2p-|R_2\cap A|-1\le 2p-|R_2\cap B|-2$, i.e. $e(G[R_2\cap A, R_2\cap B])\equiv a'-1$(mod 2) if $a'\le b'$, and $e(G[R_2\cap A, R_2\cap B])\equiv 2p-|R_2\cap B|=b'+1$(mod 2) if $2p-|R_2\cap A|-1>2p-|R_2\cap B|-2$, i.e. $e(G[R_2\cap A, R_2\cap B])\equiv b'+1$(mod 2) if $a'>b'$. So $e(G[R_1\cap A, R_1\cap B])+e(G[R_2\cap A, R_2\cap B])\equiv 1$(mod 2). Since $p$ is even, $|R_1\cap A|+|R_1\cap B|=p$ is even. By Lemma \ref{sum}, $e(G[R_1\cap A, R_1\cap B])+e(G[R_2\cap A, R_2\cap B])$ is even, a contradiction.\q

\begin{claim}\label{case1-allintersecting}
$\mathcal{R}_{0a}^*\cup\mathcal{R}_{0b}^*\cup\mathcal{R}_{0c}^*$ is intersecting.
\end{claim}
\noindent {\em Proof of Claim \ref{case1-allintersecting}.} It is sufficient to show that $\mathcal{R}_{0a}^*\cup\mathcal{R}_{0b}^*$ and $\mathcal{R}_{0c}^*$ are cross-intersecting. Without loss of generality, we prove that $\mathcal{R}_{0a}^*$ and $\mathcal{R}_{0c}^*$ are cross-intersecting here. Recall that $\mathcal{R}^*_{0a}=\mathcal{R}^1_{0a}\cup\mathcal{R}^2_{0a}$, $\mathcal{R}^*_{0c}=\mathcal{R}^1_{0c}\cup\mathcal{R}^2_{0c}$, and every member in $\mathcal{R}^1_{0a}$ contains all elements in $A$ and at least one element in $C$, every member in $\mathcal{R}^2_{0a}$ consists of some elements  but not all elements in $B$ and some elements in $C$, every member in $\mathcal{R}^1_{0c}$ contains all elements in $C$ and at least one element in $B$, every member in $\mathcal{R}^2_{0c}$ consists of some elements but not all elements in $A$ and some elements in $B$. Clearly, $\mathcal{R}^1_{0a}$ and $\mathcal{R}^1_{0c}$ are cross-intersecting, $\mathcal{R}^1_{0a}$ and $\mathcal{R}^2_{0c}$ are cross-intersecting and $\mathcal{R}^2_{0a}$ and $\mathcal{R}^1_{0c}$ are cross-intersecting. Every member in $\mathcal{R}^2_{0a}$ contains at least $p$ elements in $B$, and every member in $\mathcal{R}^2_{0c}$ contains at least $p+1$ elements in $B$, so they must intersect in $B$. Therefore, we may infer that $\mathcal{R}_{0a}^*$ and $\mathcal{R}_{0c}^*$ are cross-intersecting. Similarly, $\mathcal{R}_{0b}^*$ and $\mathcal{R}_{0c}^*$ are cross-intersecting. Hence, $\mathcal{R}_{0a}^*\cup\mathcal{R}_{0b}^*\cup\mathcal{R}_{0c}^*$ is intersecting.\q
	
	In summary, we have constructed an intersecting family $\mathcal{F}\subset{\mathcal{N}\choose 3p}$ such that ${\mathcal{N}\choose 3p-1}\subset\mathcal{D}(\mathcal{F})$  for $|\mathcal{N}|=6p$.
	
	Next, we will show that $\cup_{j=1}^{3p-2}{\mathcal{N}\choose j}\subset\mathcal{D}(\mathcal{F})$. Recall that $\mathcal{R}_1\subset \mathcal{F}$, so $\mathcal{D}(\mathcal{R}_1)\subset \mathcal{D}(\mathcal{F})$. In fact we have the following claim.

\begin{claim}\label{case1-r1diff}
$\cup_{j=1}^{k-2}{\mathcal{N}\choose j}\subset\mathcal{D}(\mathcal{R}_1)$.
\end{claim}
\noindent {\em Proof of Claim \ref{case1-r1diff}.} It is sufficient to show that for any $X\subset \mathcal{N}$ we have $X\in \mathcal{D}(\mathcal{R}_1)$, where $1\le |X|\le k-2$. Recall that $k=3p$.
	
	We consider $|X|=1$ first. We may assume that $X\subset A$ (the cases $X\subset B$ and $X\subset C$ are similar). Without loss of generality, let $X=\{0\}$. Take $B'=\{1, 4, \dots, \frac{3k}{2}-2\}$ and $C'=\{2, 5, \dots, \frac{3k}{2}-4\}$. Since $X\cup B'\cup C'\in \mathcal{R}_1$ and $\{3\}\cup B'\cup C'\in \mathcal{R}_1$, $X\subset \mathcal{D}(\mathcal{R}_1)$.
	
	We consider $2\le |X|=j\le k-2$ next. We will use induction on $j$. We will show for $|X|=k-2$ first. Let $Y=\mathcal{N}\setminus X$. Let $\sum_{x\in X} x\equiv i$ (mod 3), then $\sum_{y\in Y} y\equiv 2i$ (mod 3). For convenience, denote $N_0=A$, $N_1=B$ and $N_2=C$. If $|Y\cap N_{i+1}|\ge 2$ and $|Y\cap N_{i+2}|\ge 2$, then let $b_1, b_2\in Y\cap N_{i+1}$ and $c_1, c_2\in Y\cap N_{i+2}$ (here, the subscripts $i+1$ and $i+2$ in $N_{i+1}$ and $N_{i+2}$ are in the sense of mod 3). So $X\cup\{c_1, c_2\}\in \mathcal{R}_1$ and $Y\setminus\{b_1, b_2\}\in \mathcal{R}_1$ and $X=(X\cup\{c_1, c_2\})\setminus(Y\setminus\{b_1, b_2\})$, i.e. $X\in \mathcal{D}(\mathcal{R}_1)$. Therefore we may assume that $|Y\cap N_{i+1}|\le 1$ or $|Y\cap N_{i+2}|\le 1$. Without loss of generality, we assume that $|Y\cap N_{i+1}|\le 1$, then $|Y\cap N_{i+2}|\ge 3$ and $|Y\cap N_i|\ge 3$ by noting that $|Y|=k+2$. Let $a_1\in Y\cap N_i$ and $c_1, c_2, c_3\in Y\cap N_{i+2}$. So $X\cup\{c_1, c_2\}\in \mathcal{R}_1$ and $Y\setminus\{a_1, c_3\}\in \mathcal{R}_1$ and $X=(X\cup\{c_1, c_2\})\setminus(Y\setminus\{a_1, c_3\})$, i.e. $X\in \mathcal{D}(\mathcal{R}_1)$.
	
	Assume that ${\mathcal{N}\choose j+1}\subset\mathcal{D}(\mathcal{R}_1)$ where $2\le j\le k-3$, it is sufficient to show that ${\mathcal{N}\choose j}\subset\mathcal{D}(\mathcal{R}_1)$. Let $|X|=j$ and $Y=\mathcal{N}\setminus X$. Note that one of $|Y\cap A|$, $|Y\cap B|$ and $|Y\cap C|$ is at least 3. Without loss of generality, we may assume that $|Y\cap A|\ge 3$. Let $a_1\in Y\cap A$ and $X'=X\cup\{a_1\}$. Then by the induction hypothesis, there exists $I\subset Y\setminus\{a_1\}$ and $Y'\subset Y\setminus (I\cup \{a_1\})$ such that $X'\cup I\in \mathcal{R}_1$, $Y'\cup I\in \mathcal{R}_1$ and $X\cup\{a_1\}=(X'\cup I)\setminus(Y'\cup I)$.
	
	We claim that $Y'\cap A=\emptyset$. Otherwise if there exists $a_2\in Y'\cap A$, then $X\cup I\cup\{a_2\}\in \mathcal{R}_1$ and $X=(X\cup I\cup\{a_2\})\setminus(Y'\cup I)$, hence $X\in \mathcal{D}(\mathcal{R}_1)$.
	
	We claim that $A\setminus (X'\cup I)=\emptyset$. Otherwise if there exists $a_3\in A\setminus (X'\cup I)$, then we claim that $Y'\cap B=\emptyset$ or $Y'\cap C=\emptyset$. Otherwise if $b_1\in Y'\cap B$ and $c_1\in Y'\cap C$. Then $Y'\cup I\cup\{a_1, a_3\}\setminus\{b_1, c_1\}\in \mathcal{R}_1$ and $X=(X'\cup I)\setminus(Y'\cup I\cup\{a_1, a_3\}\setminus\{b_1, c_1\})$. Hence $X\in \mathcal{D}(\mathcal{R}_1)$. Without loss of generality, we may assume that $Y'\cap C=\emptyset$. So $Y'\subset B$ with $|Y'|=j+1\ge 3$. If $|A\setminus (X'\cup I)|\ge 2$, then let $a_3, a_4\in A\setminus (X'\cup I)$. Assume that $b_1, b_2, b_3\in Y'\cap B$. Then $Y'\cup I\cup \{a_1, a_3, a_4\}\setminus\{b_1, b_2, b_3\}\in \mathcal{R}_1$ and $X=(X\cup I\cup\{a_1\})\setminus(Y'\cup I\cup \{a_1, a_3, a_4\}\setminus\{b_1, b_2, b_3\})$, hence $X\in \mathcal{D}(\mathcal{R}_1)$. So $|A\setminus (X'\cup I)|=|\{a_3\}|=1$ and $A\setminus\{a_3\}\subset X\cup I\cup \{a_1\}$. Since $|A\setminus\{a_3\}|=2p-1$ and $|X\cup I\cup\{a_1\}|=k=3p$, $|C\setminus (X'\cup I)|\ge 1$. Let $c_1\in C\setminus (X'\cup I)$. Then $I\cup Y'\cup\{a_1, c_1\}\setminus\{b_1, b_2\}\in \mathcal{R}_1$ and $X=(X\cup I\cup\{a_1\})\setminus(I\cup Y'\cup\{a_1, c_1\}\setminus\{b_1, b_2\})$, hence $X\in \mathcal{D}(\mathcal{R}_1)$. So $A\setminus (X'\cup I)=\emptyset$. Therefore $A\subset X'\cup I$. Since $|A\setminus X|\ge 3$, $|I\cap A|\ge 2$. Assume that $\theta_1, \theta_2\in I\cap A$.
	
	We claim that $|Y'\cap B|=0$ or $|Y'\cap C|=0$. Otherwise, there exists $b_1\in Y'\cap B$ and $c_1\in Y'\cap C$, let $I'=I\cup\{b_1, c_1\}\setminus\{\theta_1, \theta_2\}$ and $Y''=Y'\cup\{\theta_1, \theta_2\}\setminus\{b_1, c_1\}$. So $X'\cup I'\in \mathcal{R}_1$, $I'\cup Y''\in\mathcal{R}_1$, and $|Y''\cap A|\ge 1$, a contradiction. Without loss of generality, we may assume that $Y'\cap C=\emptyset$, then $Y'\subset B$ and $|Y'|=j+1\ge 3$. Let $b_1, b_2\in Y'\cap B$. Since $X\cup I\cup\{a_1\}=k=3p$, $A\subset X\cup I\cup\{a_1\}$ and $Y'\cap C=\emptyset$, there exists $c\in C\setminus(X\cup I\cup\{a_1\}\cup Y')$. Let $Y''=Y\setminus\{b_1, b_2\}\cup\{a_1, c\}$. Then $Y''\cup I\in \mathcal{R}_1$ and $X=(X\cup I\cup\{a_1\})\setminus(Y''\cup I)$, a contradiction.\q

{\bf Case 2. $k\equiv 1$(mod 3).}

Let $k=3p+1$ be even and $\mathcal{N}=\{0, 1, 2, \dots, 6p+1\}$,
$$A=\{0, 3, 6, \dots, 6p\},$$ $$B=\{1, 4, 7, \dots, 6p+1\}$$ and $$C=\{2, 5, 8, \dots, 6p-1\}.$$
We will construct an intersecting family $\mathcal{F}$ such that $\cup_{j=1}^{k-1}{\mathcal{N}\choose j}\subset \mathcal{D}(\mathcal{F})$. We first focus on constructing an intersecting family satisfying the requirement ${\mathcal{N}\choose k-1}\subset \mathcal{D}(\mathcal{F})$. Then we show that our construction satisfies $\cup_{j=1}^{k-2}{\mathcal{N}\choose j}\subset \mathcal{D}(\mathcal{F})$ as well. Let $X\in {\mathcal{N}\choose k-1}$ and $|X\cap A|=a$, $|X\cap B|=b$ and $|X\cap C|=c$, then $a+b+c=k-1=3p$, we will construct intersecting $\mathcal{F}\subseteq {\mathcal{N}\choose 3p+1}$ such that $X\in \mathcal{D}(\mathcal{F})$ for all $X\in{\mathcal{N}\choose k-1}$. Let $Y=\mathcal{N}\setminus X$. Note that $\sum_{t\in \mathcal{N}} t\equiv 1$(mod 3) and $\sum_{x\in X} x\equiv b+2c$(mod 3).
Recall that
$$\mathcal{R}_i=\{\{x_1, x_2, \dots, x_k\}\in {\mathcal{N}\choose 3p}: \sum_{j=1}^k x_j\equiv i(mod \ 3)\}.$$	
By Lemma \ref{claimR1}, $\mathcal{R}_0$ is intersecting, and $\mathcal{R}_0$ and $\mathcal{R}_2$ are cross-intersecting.
	
	We first put all members in $\mathcal{R}_0$ to $\mathcal{F}$, i.e. $\mathcal{R}_0\subset \mathcal{F}$. By Lemma \ref{normal}, $X\in \mathcal{D}(\mathcal{R}_0)$ if $a, b<2p+1$ and $c<2p$. For the situation $a=2p+1$ or $b=2p+1$ or $c=2p$, we will add an intersecting subfamily $\mathcal{R}^*_{2a}\cup\mathcal{R}^*_{2b}\cup\mathcal{R}^*_{2c}=\mathcal{R}^*_2\subseteq\mathcal{R}_2$ into $\mathcal {F}$ such that $\mathcal{F}=\mathcal{R}^*_2\cup \mathcal{R}_0$. By Lemma \ref{claimR1}, $\mathcal{F}$ is intersecting provided $\mathcal{R}^*_2$ is intersecting.
	
	We construct an auxiliary tripartite graph $G$ with vertex set $A\cup B\cup C$ and edge set $E(G)=\cup_{0\le \alpha\le 2p-1}\{\{3\alpha,3\alpha+1\}, \{3\alpha+1, 3\alpha+2\}, \{3\alpha, 3\alpha+2\}\}\cup\{6p, 6p+1\}$. Note that $|A|=|B|\equiv 1$(mod 2) and $|C|\equiv 0$(mod 2) and $p\equiv 1$(mod 2).
	
We consider $a=2p+1$ first. Note that $\sum_{x\in X} x\equiv b+2c$ (mod 3). If $b+2c\equiv 2$(mod 3), then $\sum_{y\in Y} y\equiv 2$(mod 3). Let $\beta\in Y\cap B$ and $\gamma\in Y\cap C$. Then $X\cup\{\beta\}, Y\setminus\{\gamma\}\in \mathcal{R}_0\subset \mathcal{F}$ and $X=(X\cup\{\beta\})\setminus(Y\setminus\{\gamma\})\in\mathcal{D}(\mathcal{F})$. In this case, we are fine.
	
	For the case $a=2p+1$ and $b+2c\equiv 0$ (mod 3), applying Lemma \ref{add} by taking $s=2p$, $U_2=B$, $V_2=C$, $U=X\cap B$ and $V=X\cap C$, we have the following claim.
	
\begin{claim}\label{case2.1add}
Suppose that $a=2p+1$ and $b+2c\equiv 0$ (mod 3). If $b-|X\cap\{6p+1\}|\le c$, then there exists a vertex $\gamma'\in Y\cap C$ such that $e(G[X\cap B, (X\cap C)\cup\{\gamma'\}])\equiv (b-|X\cap\{6p+1\}|)$(mod 2). If $b-|X\cap\{6p+1\}|>c$, then there exists a vertex $\gamma'\in Y\cap C$ such that $e(G[X\cap B, (X\cap C)\cup\{\gamma'\}])\equiv c$(mod 2). Furthermore $N((X\cap C)\cup\{\gamma'\})\not\subset X\cap B.$
\end{claim}
	
Let us continue the case $a=2p+1$ and $b+2c\equiv 0$ (mod 3). Take $\gamma'$ as in Claim \ref{case2.1add}, and let $\mathcal{R}_{2a}^1$ be the family of all such $X\cup\{\gamma'\}$. Note that $\in\mathcal{R}_{2a}^1\subset\mathcal{R}_{2a}^*$. Since $\sum_{y\in Y} y\equiv 1$(mod 3), take $\beta'\in Y\cap B$, then $Y\setminus\{\beta'\}\in \mathcal{R}_0$ and $X=(X\cup\{\gamma'\})\setminus(Y\setminus\{\beta'\})\in \mathcal{D}(\mathcal{F}).$ Clearly $\mathcal{R}_{2a}^1$ is intersecting since each $k$-set in $\mathcal{R}_{2a}^1$ contains $A$.
	
	For the case $a=2p+1$ and $b+2c\equiv 1$ (mod 3), applying Lemma \ref{delete} by taking $s=2p$, $U_2=B$, $V_2=C$, $U=Y\cap B$ and $V=Y\cap C$, we have the following claim.
	
\begin{claim}\label{case2.1delete}
Suppose that $a=2p+1$ and $b+2c\equiv 1$ (mod 3). Let $Y=\mathcal{N}\setminus X$ and $N(Y\cap B)\not\subset Y\cap C$. If $b-|X\cap\{6p+1\}|\le c-2$, then there exists $\beta''\in (Y\cap B)\setminus\{6p+1\}$ such that $e(G[(Y\cap B)\setminus\{\beta''\}, Y\cap C])\equiv (b+1)$(mod 2). If $b-|X\cap\{6p+1\}|>c-2$, then there exists $\beta''\in (Y\cap B)\setminus\{6p+1\}$ such that $e(G[(Y\cap B)\setminus\{\beta''\}, Y\cap C])\equiv (c-1-|X\cap\{6p+1\}|)$(mod 2).
	\end{claim}
	
Let us continue the case $a=2p+1$ and $b+2c\equiv 1$ (mod 3). Note that $X\cup\{\gamma''\}\in\mathcal{R}_0$ for $\gamma''\in Y\cap C$. If $N(Y\cap B)\not\subset Y\cap C$, then take $\beta''$ as in Claim \ref{case2.1delete}. If $N(Y\cap B)\subset Y\cap C$, then take any fixed $\beta''\in Y\cap B\setminus\{6p+1\}$. Let $\mathcal{R}_{2a}^2$ be the family of all such $Y\setminus\{\beta''\}$. Note that $\mathcal{R}_{2a}^2\subset\mathcal{R}_{2a}^*$ and $X=(X\cup\{\gamma''\})\setminus(Y\setminus\{\beta''\})\in \mathcal{D}(\mathcal{F})$. Clearly $\mathcal{R}_{2a}^2$ is intersecting since for any $R_2^1$ and $R_2^2$ in $\mathcal{R}_{2a}^2$ we have $|R_2^1|=|R_2^2|=3p+1$ and $R_2^1\cup R_2^2\subset B\cup C$, then $|R_2^1\cap R_2^2|\ge 2p+1$.
	
\begin{claim}\label{case2-r2aintersecting}
Let $\mathcal{R}_{2a}^*=\mathcal{R}_{2a}^1\cup\mathcal{R}_{2a}^2$, then $\mathcal{R}_{2a}^*$ is intersecting.
\end{claim}
\noindent {\em Proof of Claim \ref{case2-r2aintersecting}.} Since we have explained that $\mathcal{R}_{2a}^1$ is intersecting and $\mathcal{R}_{2a}^2$ is intersecting, it is sufficient to show that $\mathcal{R}_{2a}^1$ and $\mathcal{R}_{2a}^2$ are cross-intersecting. Suppose that there exist $R_1\in \mathcal{R}_{2a}^1$ and $R_2\in \mathcal{R}_{2a}^2$ such that $R_1\cap R_2=\emptyset.$ If $N(R_2\cap B)\subset R_2\cap C$, then $N(R_1\cap C)\subset R_1\cap B$, a contradiction to Claim \ref{case2.1add}. So $N(R_2\cap B)\not\subset R_2\cap C$. Let $|R_1\cap B|=b'$ and $|R_1\cap C|-1=c'$, then $b'+2c'\equiv 0$(mod 3). By Claim \ref{case2.1add}, $e(G[R_1\cap B, R_1\cap C])\equiv (b'-|R_1\cap\{6p+1\}|)$(mod 2) if $b'-|R_1\cap\{6p+1\}|\le c'$ and $e(G[R_1\cap B, R_1\cap C])\equiv c'$(mod 2) if $b'-|R_1\cap\{6p+1\}|>c'$. By Claim \ref{case2.1delete}, $e(G[R_2\cap B, R_2\cap C])\equiv b'$(mod 2) if $b'-|R_1\cap\{6p+1\}|\le c'$ and $e(G[R_2\cap B, R_2\cap C])\equiv (c'-|R_1\cap\{6p+1\}|)$(mod 2) if $b'-|R_1\cap\{6p+1\}|>c'$. So $e(G[R_1\cap B, R_1\cap C])+e(G[R_2\cap B, R_2\cap C])\equiv |R_1\cap\{6p+1\}|$(mod 2). Since $p$ is odd, $|R_1\cap B|+|R_1\cap C|=p$ is odd. By Lemma \ref{sum}, $e(G[R_1\cap B, R_1\cap C])+e(G[R_2\cap B, R_2\cap C])\equiv p-|R_1\cap\{6p+1\}|\equiv (1+|R_1\cap\{6p+1\}|)$(mod 2), a contradiction.\q
	
Next, we consider $b=2p+1$. Note that $\sum_{x\in X} x\equiv 2p+1+2c$ (mod 3). If $2p+1+2c\equiv 1$(mod 3), then $\sum_{y\in Y} y\equiv 0$(mod 3). Let $\alpha\in Y\cap A$ and $\gamma \in Y\cap C$. Then $X\cup\{\gamma\}, Y\setminus\{\alpha\}\in \mathcal{R}_0\subset \mathcal{F}$ and $X=(X\cup\{\gamma\})\setminus(Y\setminus\{\alpha\})\in\mathcal{D}(\mathcal{F})$. In this case, we are fine.
	
	For the case $b=2p+1$ and $2p+1+2c\equiv 2$ (mod 3), applying Lemma \ref{add} by taking $s=2p$, $U_3=C$, $V_3=A$, $U=X\cap C$ and $V=X\cap A$, we have the following claim.
	
	\begin{claim}\label{case2.2add}
		Suppose that $b=2p+1$ and $2p+1+2c\equiv 2$(mod 3). If $c\le a-|X\cap\{6p\}|$, then there exists a vertex $\alpha'\in (Y\cap A)\setminus\{6p\}$ such that
		$e(G[(X\cap A)\cup\{\alpha'\}, X\cap C])\equiv c$(mod 2). If $c>a-|X\cap\{6p\}|$, then there exists a vertex $\alpha'\in (Y\cap A)\setminus\{6p\}$ such that $e(G[(X\cap A)\cup\{\alpha'\}, X\cap C])\equiv (a-|X\cap\{6p\}|)$(mod 2). Furthermore $N((X\cap A)\cup\{\alpha'\})\not\subset X\cap C.$
	\end{claim}
	
Let us continue the case $b=2p+1$ and $2p+1+2c\equiv 2$(mod 3). Take $\alpha'$ as in Claim \ref{case2.2add} and let $\mathcal{R}_{2b}^1$ be the family of all such $X\cup\{\alpha'\}$. Note that $\mathcal{R}_{2b}^1\subset\mathcal{R}_{2b}^*$. Since $\sum_{y\in Y} y\equiv 2$(mod 3), take $\gamma'\in Y\cap C$, then $Y\setminus\{\gamma'\}\in \mathcal{R}_0$ and $X=(X\cup\{\alpha'\})\setminus(Y\setminus\{\gamma'\})\in \mathcal{D}(\mathcal{F}).$ Clearly $\mathcal{R}_{2b}^1$ is intersecting since each $k$-set in $\mathcal{R}_{2b}^1$ contains $B$.
	
	For the case $b=2p+1$ and $2p+1+2c\equiv 0$ (mod 3), applying Lemma \ref{delete} by taking $s=2p$, $U_3=C$, $V_3=A$, $U=Y\cap C$ and $V=Y\cap A$, we have the following claim.
	
\begin{claim}\label{case2.2delete}
Suppose that $b=2p+1$ and $2p+1+2c\equiv 0$(mod 3). Let $N(Y\cap C)\not\subset Y\cap A$. If $c\le a-|X\cap\{6p\}|-2$, then there exists $\gamma''\in Y\cap C$ such that $e(G[Y\cap A, (Y\cap C)\setminus\{\gamma''\}])\equiv (c+1-|X\cap\{6p\}|)$(mod 2). If $c>a-|X\cap\{6p\}|-2$, then there exists $\gamma''\in Y\cap C$ such that $e(G[Y\cap A, (Y\cap C)\setminus\{\gamma''\}]\equiv (a-1)$(mod 2).
\end{claim}
	
Let us continue the case $b=2p+1$ and $2p+1+2c\equiv 0$(mod 3). Note that $X\cup\{\alpha''\}\in\mathcal{R}_0$ for $\alpha''\in Y\cap A$. If $N(Y\cap C)\not\subset Y\cap A$, then take $\gamma''$ as in Claim \ref{case2.2delete}. If $N(Y\cap C)\subset Y\cap A$, then take any fixed $\gamma''\in Y\cap C$. Let $\mathcal{R}_{2b}^2$ be the family of all such $Y\setminus\{\gamma''\}$. Note that $\mathcal{R}_{2b}^2\subset\mathcal{R}_{2b}^*$ and $X=(X\cup\{\alpha''\})\setminus(Y\setminus\{\gamma''\})\in \mathcal{D}(\mathcal{F})$. Clearly $\mathcal{R}_{2b}^2$ is intersecting since for any $R_2^1$ and $R_2^2$ in $\mathcal{R}_{2b}^2$ we have $|R_2^1|=|R_2^2|=3p+1$ and $R_2^1\cup R_2^2\subset A\cup C$, then $|R_2^1\cap R_2^2|\ge 2p+1$.
	
\begin{claim}\label{case2-r2bintersecting}
Let $\mathcal{R}_{2b}^*=\mathcal{R}_{2b}^1\cup\mathcal{R}_{2b}^2$, then $\mathcal{R}_{2b}^*$ is intersecting.
\end{claim}
\noindent {\em Proof of Claim \ref{case2-r2bintersecting}.} Since we have explained that $\mathcal{R}_{2b}^1$ is intersecting and $\mathcal{R}_{2b}^2$ is intersecting, it is sufficient to show that $\mathcal{R}_{2b}^1$ and $\mathcal{R}_{2b}^2$ are cross-intersecting. Suppose that there exist $R_1\in \mathcal{R}_{2b}^1$ and $R_2\in \mathcal{R}_{2b}^2$ such that $R_1\cap R_2=\emptyset$. If $N(R_2\cap C)\subset R_2\cap A$, then $N(R_1\cap A)\subset R_1\cap C$, contradicting to Claim \ref{case2.2add}. So $N(R_2\cap C)\not\subset R_2\cap A$. Let $c'=|R_1\cap C|$ and $a'=|R_1\cap A|-1$. By Claim \ref{case2.2add}, $e(G[R_1\cap A, R_1\cap C])\equiv c'$ (mod 2) if $c'\le a'-|R_1\cap\{6p\}|$ and $e(G[R_1\cap A, R_1\cap C])\equiv (a'-|R_1\cap\{6p\}|)$ (mod 2) if $c'>a'-|R_1\cap\{6p\}|$. By Claim \ref{case2.2delete}, $e(G[R_2\cap A, R_2\cap C])\equiv (c'-|R_1\cap\{6p\}|)$ (mod 2) if $c'\le a'-|R_1\cap\{6p\}|$ and $e(G[R_2\cap B, R_2\cap C])\equiv a'$(mod 2) if $c'>a'-|R_1\cap\{6p\}|$. So $e(G[R_1\cap A, R_1\cap C])+e(G[R_2\cap A, R_2\cap C])\equiv (|R_1\cap\{6p\}|)$(mod 2). Since $p$ is odd, $|R_1\cap A|+|R_1\cap C|=p$ is odd. By Lemma \ref{sum}, $e(G[R_1\cap A, R_1\cap C])+e(G[R_2\cap A, R_2\cap C])\equiv p-|R_1\cap\{6p\}|\equiv (1+|R_1\cap\{6p\}|)$ (mod 2), a contradiction.\q

\begin{claim}\label{case2-r2ar2bintersecting}
$\mathcal{R}_{2a}^*$ and $\mathcal{R}_{2b}^*$ are cross-intersecting.
\end{claim}
\noindent {\em Proof of Claim \ref{case2-r2ar2bintersecting}.} Recall that $\mathcal{R}^*_{2a}=\mathcal{R}^1_{2a}\cup\mathcal{R}^2_{2a}$, $\mathcal{R}^*_{2b}=\mathcal{R}^1_{2b}\cup\mathcal{R}^2_{2b}$, and every member in $\mathcal{R}^1_{2a}$ contains all elements in $A$ and at least one element in $C$, every member in $\mathcal{R}^2_{2a}$ consists of some elements  but not all elements in $B$ and some elements in $C$, every member in $\mathcal{R}^1_{2b}$ contains all elements in $B$ and at least one element in $A$, every member in $\mathcal{R}^2_{2b}$ consists of some elements but not all elements in $C$ and some elements in $A$. Clearly, $\mathcal{R}^1_{2a}$ and $\mathcal{R}^1_{2b}$ are cross-intersecting, $\mathcal{R}^1_{2a}$ and $\mathcal{R}^2_{2b}$ are cross-intersecting and $\mathcal{R}^2_{2a}$ and $\mathcal{R}^1_{2b}$ are cross-intersecting. Every member in $\mathcal{R}^2_{2a}$ contains at least $p+1$ elements in $C$, and every member in $\mathcal{R}^2_{2b}$ contains at least $p$ elements in $C$, so they must intersect in $C$. Therefore $\mathcal{R}_{2a}^*$ and $\mathcal{R}_{2b}^*$ are cross-intersecting.\q

For $c=2p$, $\sum_{x\in X} x\equiv b+4p$ (mod 3). If $b+4p\equiv 0$(mod 3), then $\sum_{y\in Y} y\equiv 1$(mod 3). Let $\alpha\in Y\cap A$ and $\beta \in Y\cap B$. Then $X\cup\{\alpha\}, Y\setminus\{\beta\}\in \mathcal{R}_0\subset \mathcal{F}$ and $X=(X\cup\{\alpha\})\setminus(Y\setminus\{\beta\})\in\mathcal{D}(\mathcal{F})$. In this case, we are fine.
	
	For the case $c=2p$ and $b+4p\equiv 1$ (mod 3), applying Lemma \ref{add} by taking $s=2p+1$, $U_1=A$, $V_1=B$, $U=X\cap A$ and $V=X\cap B$, we have the following claim.
	
	\begin{claim}\label{case2.3add}
		Suppose that $c=2p$ and $b+4p\equiv 1$(mod 3). If $a\le b$, then there exists a vertex $\beta'\in Y\cap B$ such that
		$e(G[X\cap A, (X\cap B)\cup\{\beta'\}])\equiv a$(mod 2). If $a>b$, then there exists a vertex $\beta'\in Y\cap B$ such that
		$e(G[X\cap A, (X\cap B)\cup\{\beta'\}])\equiv b$(mod 2). Furthermore $N((X\cap B)\cup\{\beta'\})\not\subset X\cap A.$
	\end{claim}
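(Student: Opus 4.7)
The plan is to read Claim~\ref{case2.3add} as a direct application of Lemma~\ref{add} to the bipartite subgraph $G[A,B]$, with parameters $s=2p+1$, $U_1=A$, $V_1=B$, $U=X\cap A$, $V=X\cap B$, exactly as the sentence preceding the claim instructs. So the only substantive work is to identify $G[A,B]$ with a copy of $\mathcal{Z}_1$ and to check the size hypothesis of the lemma.

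First I would recognize $G[A,B]$ as $\mathcal{Z}_1$ with $s=2p+1$. By the definition of $E(G)$, the edges between $A$ and $B$ are precisely $\{\{3\alpha,3\alpha+1\}:0\le \alpha\le 2p-1\}\cup\{\{6p,6p+1\}\}$, a perfect matching between $A$ and $B$ (both of size $2p+1$). Labelling $u_{\alpha+1}=3\alpha$ and $v_{\alpha+1}=3\alpha+1$ for $0\le\alpha\le 2p-1$, together with $u_{2p+1}=6p$ and $v_{2p+1}=6p+1$, gives the edge set $\{u_iv_i:1\le i\le 2p+1\}$, which is exactly $\mathcal{Z}_1$.

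Next I would verify the size condition $|U|+|V|\le s-1=2p$. Since $a+b+c=k-1=3p$ and we are in the case $c=2p$, $|U|+|V|=a+b=p$, and $p\le 2p$ is immediate. Then I invoke Lemma~\ref{add} for $\mathcal{Z}_1$: because $\mathcal{Z}_1$ has no distinguished vertex $u_{s+1}$ or $v_{s+1}$, the two conditions $|U|-|U\cap\{u_{s+1}\}|\le|V|-|V\cap\{v_{s+1}\}|$ and its reverse collapse to $a\le b$ and $a>b$, and the conclusion of the lemma yields $\beta'\in B\setminus(X\cap B)=Y\cap B$ with $e(G[X\cap A,(X\cap B)\cup\{\beta'\}])\equiv a\pmod 2$ in the first case and $\equiv b\pmod 2$ in the second, along with $N((X\cap B)\cup\{\beta'\})\not\subset X\cap A$ from the ``Furthermore'' clause.

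I do not expect any real obstacle: the claim is a bookkeeping translation of Lemma~\ref{add}. The congruence $b+4p\equiv 1\pmod 3$ is not used in the claim itself, it is only invoked in the next paragraph of the main argument to guarantee that $X\cup\{\beta'\}$ lies in $\mathcal{R}_2$. The only non-trivial ingredient is the size inequality $a+b=p\le 2p$, which is forced by the hypothesis $c=2p$.
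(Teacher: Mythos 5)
Your proof is correct and is essentially identical to the paper's: the paper obtains Claim~\ref{case2.3add} simply by invoking Lemma~\ref{add} with $s=2p+1$, $U_1=A$, $V_1=B$, $U=X\cap A$, $V=X\cap B$, and you have filled in the routine checks that $G[A,B]$ is a perfect matching on two sides of size $2p+1$ (so it is a copy of $\mathcal{Z}_1$, with no distinguished vertex $u_{s+1}$ or $v_{s+1}$) and that $|U|+|V|=a+b=p\le 2p=s-1$. Your observation that the congruence $b+4p\equiv 1\pmod 3$ plays no role in the claim itself, only in the surrounding argument, is also accurate.
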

	
Let us continue the case $c=2p$ and $b+4p\equiv 1$(mod 3). Take $\beta'$ as in Claim \ref{case2.3add} and let $\mathcal{R}_{2c}^1$ be the family of all such $X\cup\{\beta'\}$. Note that $\mathcal{R}_{2c}^1\subset\mathcal{R}_{2c}^*$. Since $\sum_{y\in Y} y\equiv 0$(mod 3), take $\alpha'\in Y\cap A$, then $Y\setminus\{\alpha'\}\in \mathcal{R}_0$ and $X=(X\cup\{\beta'\})\setminus(Y\setminus\{\alpha'\})\in \mathcal{D}(\mathcal{F}).$ Clearly $\mathcal{R}_{2c}^1$ is intersecting since each $k$-set in $\mathcal{R}_{2c}^1$ contains $C$.
	
	For the case $c=2p$ and $b+4p\equiv 2$ (mod 3), applying Lemma \ref{delete} by taking $s=2p+1$, $U_1=A$, $V_1=B$, $U=Y\cap A$ and $V=Y\cap B$, we have the following claim.
	
\begin{claim}\label{case2.3delete}
Suppose that $c=2p$ and $b+4p\equiv 2$(mod 3). Let $Y=\mathcal{N}\setminus X$ and $N(Y\cap A)\not\subset Y\cap B$. If $a\le b-2$, then there exists $\alpha''\in Y\cap A$ such that $e(G[(Y\cap A)\setminus\{\alpha''\}, Y\cap B])\equiv (a+1)$(mod 2). If $a>b-2$, then there exists $\alpha''\in Y\cap A$ such that $e(G[(Y\cap A)\setminus\{\alpha''\}, Y\cap B])\equiv (b-1)$(mod 2).
\end{claim}
	
Let us continue the case $c=2p$ and $b+4p\equiv 2$(mod 3). Note that $X\cup\{\beta''\}\in\mathcal{R}_0$ for $\beta''\in Y\cap B$. If $N(Y\cap A)\not\subset Y\cap B$, then take $\alpha''$ as in Claim \ref{case2.3delete}. If $N(Y\cap A)\subset Y\cap B$, then take any fixed $\alpha''\in Y\cap A$. Let $\mathcal{R}_{2c}^2$ be the family of all such $Y\setminus\{\alpha''\}$. Note that $\mathcal{R}_{2c}^2\subset\mathcal{R}_{2c}^*$ and $X=(X\cup\{\beta''\})\setminus(Y\setminus\{\alpha''\})\in \mathcal{D}(\mathcal{F})$. Clearly $\mathcal{R}_{2c}^2$ is intersecting since for any $R_2^1$ and $R_2^2$ in $\mathcal{R}_{2c}^2$ we have $|R_2^1|=|R_2^2|=3p+1$ and $R_2^1\cup R_2^2\subset A\cup B$, then $|R_2^1\cap R_2^2|\ge 2p$.
	
\begin{claim}\label{case2-r2cintersecting}
Let $\mathcal{R}_{2c}^*=\mathcal{R}_{2c}^1\cup\mathcal{R}_{2c}^2$, then $\mathcal{R}_{2c}^*$ is intersecting.\end{claim}
\noindent {\em Proof of Claim \ref{case2-r2cintersecting}.} Since we have explained that $\mathcal{R}_{2c}^1$ is intersecting and $\mathcal{R}_{2c}^2$ is intersecting, it is sufficient to show that $\mathcal{R}_{2c}^1$ and $\mathcal{R}_{2c}^2$ are cross-intersecting. Suppose that there exist $R_1\in \mathcal{R}_{2c}^1$ and $R_2\in \mathcal{R}_{2c}^2$ such that $R_1\cap R_2=\emptyset$. If $N(R_2\cap A)\subset R_2\cap B$, then $N(R_1\cap B)\subset R_1\cap A$, contradicting to Claim \ref{case2.3add}. So $N(R_2\cap A)\not\subset R_2\cap B$. Let $a'=|R_1\cap A|$ and $b'=|R_1\cap B|-1$. By Claim \ref{case2.3add}, $e(G[R_1\cap A, R_1\cap B])\equiv a'$(mod 2) if $a'\le b'$ and $e(G[R_1\cap A, R_1\cap B])\equiv b'$(mod 2) if $a'>b'$. By Claim \ref{case2.3delete}, $e(G[R_2\cap A, R_2\cap B])\equiv a$(mod 2) if $a'\le b'$ and $e(G[R_2\cap A, R_2\cap B])\equiv b'$(mod 2) if $a'>b'$. So $e(G[R_1\cap A, R_1\cap B])+e(G[R_2\cap A, R_2\cap B])\equiv 0$(mod 2). Since $p$ is odd, $|R_1\cap A|+|R_1\cap B|=p+1$ is even. By Lemma \ref{sum}, $e(G[R_1\cap A, R_1\cap B])+e(G[R_2\cap A, R_2\cap B])$ is odd, a contradiction.\q

\begin{claim}\label{case2-allintersecting}
$\mathcal{R}_{2a}^*\cup\mathcal{R}_{2b}^*\cup\mathcal{R}_{2c}^*$ is intersecting.
\end{claim}
\noindent {\em Proof of Claim \ref{case2-allintersecting}.} It is sufficient to show that $\mathcal{R}_{2a}^*\cup\mathcal{R}_{2b}^*$ and $\mathcal{R}_{2c}^*$ are cross-intersecting. Without loss of generality, we prove that $\mathcal{R}_{2a}^*$ and $\mathcal{R}_{2c}^*$ are cross-intersecting here. Recall that $\mathcal{R}^*_{2a}=\mathcal{R}^1_{2a}\cup\mathcal{R}^2_{2a}$, $\mathcal{R}^*_{2c}=\mathcal{R}^1_{2c}\cup\mathcal{R}^2_{2c}$, and every member in $\mathcal{R}^1_{2a}$ contains all elements in $A$ and at least one element in $C$, every member in $\mathcal{R}^2_{2a}$ consists of some elements but not all elements in $B$ and some elements in $C$, every member in $\mathcal{R}^1_{2c}$ contains all elements in $C$ and at least one element in $B$, every member in $\mathcal{R}^2_{2c}$ consists of some elements but not all elements in $A$ and some elements in $B$. Clearly, $\mathcal{R}^1_{2a}$ and $\mathcal{R}^1_{2c}$ are cross-intersecting, $\mathcal{R}^1_{2a}$ and $\mathcal{R}^2_{2c}$ are cross-intersecting and $\mathcal{R}^2_{2a}$ and $\mathcal{R}^1_{2c}$ are cross-intersecting. Every member in $\mathcal{R}^2_{2a}$ contains at least $p+1$ elements in $B$, and every member in $\mathcal{R}^2_{2c}$ contains at least $p+1$ elements in $B$, so they must intersect in $B$. Therefore, we may infer that $\mathcal{R}_{2a}^*$ and $\mathcal{R}_{2c}^*$ are cross-intersecting. Similarly, $\mathcal{R}_{2b}^*$ and $\mathcal{R}_{2c}^*$ are cross-intersecting. Hence, $\mathcal{R}_{2a}^*\cup\mathcal{R}_{2b}^*\cup\mathcal{R}_{2c}^*$ is intersecting.\q
	
	In summary, we have constructed an intersecting family $\mathcal{F}\subset{\mathcal{N}\choose k}$ such that ${[2k]\choose k-1}\subset\mathcal{D}(\mathcal{F})$ if $k\equiv 1$ (mod 3).
	
	Next, we will show that $\cup_{j=1}^{k-2}{[2k]\choose j}\subset\mathcal{D}(\mathcal{F})$. Recall that $\mathcal{R}_0\subset \mathcal{F}$, $\mathcal{D}(\mathcal{R}_0)\subset \mathcal{D}(\mathcal{F})$. We will have the following claim.

\begin{claim}\label{case2-r0diff}
$\cup_{j=1}^{k-2}{\mathcal{N}\choose j}\subset\mathcal{D}(\mathcal{R}_0)$.
\end{claim}
\noindent {\em Proof of Claim \ref{case2-r0diff}.} It is sufficient to show that for any $X\subset \mathcal{N}$ we have $X\subset \mathcal{D}(\mathcal{R}_0)$, where $1\le |X|\le k-2$.
	
	We consider $|X|=1$ first. We may assume that $X\subset A$ (the cases $X\subset B$ and $X\subset C$ are similar). Without loss of generality, let $X=\{0\}$. Take $B'=\{1, 4, \dots, \frac{3k}{2}-4\}$ and $C'=\{2, 5, \dots, \frac{3k}{2}-4\}$. Since $X\cup B'\cup C'\cup\{6\}\in \mathcal{R}_0$ and $\{3, 6\}\cup B'\cup C'\in \mathcal{R}_0$, $X\subset \mathcal{D}(\mathcal{R}_0)$.
	
	We consider $2\le |X|=j\le k-2$ next. We will use induction on $j$. We will show for $|X|=k-2$ first. Let $Y=\mathcal{N}\setminus X$. Let $\sum_{x\in X} x\equiv i$ (mod 3), then $\sum_{y\in Y} y\equiv 2i+1$ (mod 3). For convenience, denote $N_0=A$, $N_1=B$ and $N_2=C$. If $|Y\cap N_{i}|\ge 2$ and $|Y\cap N_{i+2}|\ge 2$, then let $a_1, a_2\in Y\cap N_{i}$ and $c_1, c_2\in Y\cap N_{i+2}$ (here, the subscripts $i$ and $i+2$ in $N_{i}$ and $N_{i+2}$ are in the sense of mod 3). So $X\cup\{a_1, a_2\}\in \mathcal{R}_0$ and $Y\setminus\{c_1, c_2\}\in \mathcal{R}_0$ and $X=(X\cup\{a_1, a_2\})\setminus(Y\setminus\{c_1, c_2\})$, i.e. $X\in \mathcal{D}(\mathcal{R}_0)$. Therefore we may assume that $|Y\cap N_{i}|\le 1$ or $|Y\cap N_{i+2}|\le 1$. Without loss of generality, we assume that $|Y\cap N_{i}|\le 1$, then $|Y\cap N_{i+1}|\ge 3$ and $|Y\cap N_{i+2}|\ge 3$ by noting that $|Y|=k+2$. Let $b_1\in Y\cap N_{i+1}$ and $c_1, c_2, c_3\in Y\cap N_{i+2}$. So $X\cup\{a_1, c_1\}\in \mathcal{R}_0$ and $Y\setminus\{c_2, c_3\}\in \mathcal{R}_0$ and $X=(X\cup\{a_1, c_1\})\setminus(Y\setminus\{c_2, c_3\})$, i.e. $X\in \mathcal{D}(\mathcal{R}_0)$.
	
	Assume that ${\mathcal{N}\choose j+1}\subset\mathcal{D}(\mathcal{R}_0)$ where $2\le j\le k-3$, it is sufficient to show that ${\mathcal{N}\choose j}\subset\mathcal{D}(\mathcal{R}_0)$. Let $|X|=j$ and $Y=\mathcal{N}\setminus X$. Note that one of $|Y\cap A|$, $|Y\cap B|$ and $|Y\cap C|$ is at least 3. Without loss of generality, we may assume that $|Y\cap A|\ge 3$. Let $a_1\in Y\cap A$ and $X'=X\cup\{a_1\}$. Then by the induction hypothesis, there exists $I\subset Y\setminus\{a_1\}$ and $Y'\subset Y\setminus (I\cup \{a_1\})$ such that $X'\cup I\in \mathcal{R}_0$, $Y'\cup I\in \mathcal{R}_0$ and $X\cup\{a_1\}=(X'\cup I)\setminus(Y'\cup I)$.
	
	We claim that $Y'\cap A=\emptyset$. Otherwise if there exists $a_2\in Y'\cap A$, then $X\cup I\cup\{a_2\}\in \mathcal{R}_0$ and $X=(X\cup I\cup\{a_2\})\setminus(Y'\cup I)$, hence $X\in \mathcal{D}(\mathcal{R}_0)$.
	
	We claim that $A\setminus (X'\cup I)=\emptyset$. Otherwise if there exists $a_3\in A\setminus (X'\cup I)$, then we claim that $Y'\cap B=\emptyset$ or $Y'\cap C=\emptyset$. Otherwise if $b_1\in Y'\cap B$ and $c_1\in Y'\cap C$. Then $Y'\cup I\cup\{a_1, a_3\}\setminus\{b_1, c_1\}\in \mathcal{R}_0$ and $X=(X'\cup I)\setminus(Y'\cup I\cup\{a_1, a_3\}\setminus\{b_1, c_1\})$. Hence $X\in \mathcal{D}(\mathcal{R}_0)$. Without loss of generality, we may assume that $Y'\cap C=\emptyset$. So $Y'\subset B$ with $|Y'|=j+1\ge 3$. If $|A\setminus (X'\cup I)|\ge 2$, then let $a_3, a_4\in A\setminus (X'\cup I)$. Assume that $b_1, b_2, b_3\in Y'\cap B$. Then $Y'\cup I\cup \{a_1, a_3, a_4\}\setminus\{b_1, b_2, b_3\}\in \mathcal{R}_0$ and $X=(X\cup I\cup\{a_1\})\setminus(Y'\cup I\cup \{a_1, a_3, a_4\}\setminus\{b_1, b_2, b_3\})$, hence $X\in \mathcal{D}(\mathcal{R}_0)$. So $|A\setminus (X'\cup I)|=|\{a_3\}|=1$ and $A\setminus\{a_3\}\subset X\cup I\cup \{a_1\}$. Since $|A\setminus\{a_3\}|=2p$ and $|X\cup I\cup\{a_1\}|=k=3p+1$, $|C\setminus (X'\cup I)|\ge 1$. Let $c_1\in C\setminus (X'\cup I)$. Then $I\cup Y'\cup\{a_1, c_1\}\setminus\{b_1, b_2\}\in \mathcal{R}_0$ and $X=(X\cup I\cup\{a_1\})\setminus(I\cup Y'\cup\{a_1, c_1\}\setminus\{b_1, b_2\})$, hence $X\in \mathcal{D}(\mathcal{R}_0)$. So $A\setminus (X'\cup I)=\emptyset$. Therefore $A\subset X'\cup I$. Since $|A\setminus X|\ge 3$, $|I\cap A|\ge 2$. Assume that $\theta_1, \theta_2\in I\cap A$.
	
	We claim that $|Y'\cap B|=0$ or $|Y'\cap C|=0$. Otherwise, there exists $b_1\in Y'\cap B$ and $c_1\in Y'\cap C$, let $I'=I\cup\{b_1, c_1\}\setminus\{\theta_1, \theta_2\}$ and $Y''=Y'\cup\{\theta_1, \theta_2\}\setminus\{b_1, c_1\}$. So $X'\cup I'\in \mathcal{R}_0$, $I'\cup Y''\in\mathcal{R}_0$, and $|Y''\cap A|\ge 1$, a contradiction. Without loss of generality, we may assume that $Y'\cap C=\emptyset$, then $Y'\subset B$ and $|Y'|=j+1\ge 3$. Let $b_1, b_2\in Y'\cap B$. Since $X\cup I\cup\{a_1\}=k=3p+1$, $A\subset X\cup I\cup\{a_1\}$ and $Y'\cap C=\emptyset$, there exist $c\in C\setminus(X\cup I\cup\{a_1\}\cup Y')$. Let $Y''=Y\setminus\{b_1, b_2\}\cup\{a_1, c\}$. Then $Y''\cup I\in \mathcal{R}_1$ and $X=(X\cup I\cup\{a_1\})\setminus(Y''\cup I)$, a contradiction.
\q

{\bf Case 3: $k\equiv 2$(mod 3)}.

Let $k=3p+2$ be even and $\mathcal{N}=\{0, 1, 2, \dots, 6p+3\}$. Then
$$A=\{0, 3, 6, \dots, 6p+3\},$$
$$B=\{1, 4, 7, \dots, 6p+1\}$$ and
$$C=\{2, 5, 8, \dots, 6p+2\}.$$
 We first focus on constructing an intersecting family satisfying the requirement ${\mathcal{N}\choose k-1}\subset \mathcal{D}(\mathcal{F})$. Then we show that our construction satisfies $\cup_{j=1}^{k-2}{\mathcal{N}\choose j}\subset \mathcal{D}(\mathcal{F})$ as well. Let $X\in {\mathcal{N}\choose k-1}$ and $|X\cap A|=a$, $|X\cap B|=b$ and $|X\cap C|=c$, then $a+b+c=k-1=3p+1$, we will construct intersecting $\mathcal{F}\subset {\mathcal{N}\choose 3p+2}$ such that $X\in \mathcal{D}(\mathcal{F})$ for all $X\in {\mathcal{N}\choose k-1}$. Let $Y=\mathcal{N}\setminus X$. Note that $\sum_{t\in \mathcal{N}} t\equiv 0$(mod 3) and $\sum_{x\in X} x\equiv b+2c$(mod 3). Recall that $$\mathcal{R}_i=\{\{x_1, x_2, \dots, x_k\}\in {\mathcal{N}\choose 3p}: \sum_{j=1}^k x_j\equiv i(mod \ 3)\}.$$
	By  Lemma \ref{claimR1}, $\mathcal{R}_1$ is intersecting, and $\mathcal{R}_0$ and $\mathcal{R}_1$ are cross-intersecting.
	
	We first put all members in $\mathcal{R}_1$ into $\mathcal{F}$, i.e. $\mathcal{R}_1\subset \mathcal{F}$. By Lemma \ref{normal}, $X\in \mathcal{D}(\mathcal{F})$ if $a<2p+2$ and $b, c<2p+1$. For the situation $a=2p+2$ or $b=2p+1$ or $c=2p+1$, we will add an intersecting subfamily $\mathcal{R}_{0a}^*\cup\mathcal{R}_{0b}^*\cup\mathcal{R}_{0c}^*=\mathcal{R}^*_0\subseteq\mathcal{R}_0$ into $\mathcal {F}$ such that $\mathcal{F}=\mathcal{R}^*_0\cup \mathcal{R}_1$ is intersecting and ${\mathcal{N}\choose k-1}\subset\mathcal{D}(\mathcal{F})$. By Lemma \ref{claimR1}, $\mathcal{F}$ is intersecting provided $\mathcal{R}^*_0$ is intersecting.
	
	We construct an auxiliary tripartite graph $G$ with vertex set $A\cup B\cup C$ and edge set $E(G)=\cup_{0\le \alpha\le 2p+1}\{\{3\alpha,3\alpha+1\}, \{3\alpha+1, 3\alpha+2\}, \{3\alpha+2, 3\alpha+3\}\}$. Note that $|A|\equiv 0$ (mod 2), $|B|=|C|\equiv 1$ (mod 2) and $p\equiv 0$ (mod 2).
	
We consider $a=2p+2$ first. Note that $\sum_{x\in X} x\equiv b+2c$ (mod 3). If $b+2c\equiv 0$ (mod 3), then $\sum_{y\in Y} y\equiv 0$(mod 3). Let $\beta\in Y\cap B$ and $\gamma\in Y\cap C$. Then $X\cup\{\beta\}, Y\setminus\{\gamma\}\in \mathcal{R}_1\subset \mathcal{F}$ and $X=(X\cup\{\beta\})\setminus(Y\setminus\{\gamma\})\in\mathcal{D}(\mathcal{F})$. In this case, we are fine.
	
	For the case $a=2p+2$ and $b+2c\equiv 1$(mod 3), applying Lemma \ref{add} by taking $s=2p+1$, $U_1=B$, $V_1=C$, $U=X\cap B$ and $V=X\cap C$, we have the following claim.
	
	\begin{claim}\label{case3.1add}
		Suppose that $a=2p+2$ and $b+2c\equiv 1$(mod 3). If $b\le c$, then there exists a vertex $\gamma'\in Y\cap C$ such that
		$e(G[X\cap B, (X\cap C)\cup\{\gamma'\}])\equiv b$(mod 2). If $b>c$, then there exists a vertex $\gamma'\in Y\cap C$ such that $e(G[X\cap B, (X\cap C)\cup\{\gamma'\}])\equiv c$(mod 2). Furthermore $N((X\cap C)\cup\{\gamma'\})\not\subset X\cap B.$
	\end{claim}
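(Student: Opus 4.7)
The plan is to recognize Claim \ref{case3.1add} as a direct application of Lemma \ref{add} to the bipartite subgraph $G[B, C]$, after verifying that the structural hypotheses of the lemma are met in this setting. In the case $k \equiv 2$ (mod 3) the auxiliary graph $G$ was defined so that the only edges between $B$ and $C$ are of the form $\{3\alpha+1, 3\alpha+2\}$ for $0\le \alpha\le 2p$. Hence $G[B, C]$ is a perfect matching on $|B|=|C|=2p+1$ vertices, and labelling the matched pairs as $(u_i, v_i)$ identifies $G[B, C]$ with the graph $\mathcal{Z}_1$ of Lemma \ref{add} for the parameter $s=2p+1$.

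Next I would verify the size hypothesis $|U|+|V|\le s-1$ required by Lemma \ref{add}. The assumption $a = 2p+2$ forces $X\cap A = A$, and the identity $a+b+c = k-1 = 3p+1$ then gives $|U|+|V| = b+c = p-1$. Because $k\equiv 2$ (mod 3) together with $k$ even forces $p$ to be even and at least $2$, we indeed have $p-1 \le 2p = s-1$.

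With the setup in hand, I would invoke Lemma \ref{add} with $(i, s, U_1, V_1, U, V) = (1, 2p+1, B, C, X\cap B, X\cap C)$. Because $\mathcal{Z}_1$ has no distinguished vertex $u_{s+1}$ or $v_{s+1}$, the quantities $|U|-|U\cap\{u_{s+1}\}|$ and $|V|-|V\cap\{v_{s+1}\}|$ degenerate to $b$ and $c$, respectively. The two cases in the conclusion of Lemma \ref{add} then deliver a vertex $\gamma'\in (C\setminus(X\cap C)) = Y\cap C$ whose inclusion produces the required parity of $e(G[X\cap B, (X\cap C)\cup\{\gamma'\}])$, with value $b$ when $b\le c$ and value $c$ when $b>c$. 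The additional conclusion $N((X\cap C)\cup\{\gamma'\})\not\subset X\cap B$ is transcribed verbatim from the ``Furthermore'' clause of Lemma \ref{add}.

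I do not anticipate any genuine obstacle: the real combinatorial content lives inside Lemma \ref{add}, and all that remains here is bookkeeping. The only points requiring care are confirming that $G[B, C]$ is truly the simple matching model $\mathcal{Z}_1$ (rather than $\mathcal{Z}_2$ or $\mathcal{Z}_3$, which would introduce an isolated vertex on one side) and verifying the numerical precondition $b+c\le 2p$. Both follow immediately from the definitions of $G$, $A$, $B$, $C$ in the $k\equiv 2$ (mod 3) setting together with the hypothesis $a = 2p+2$.
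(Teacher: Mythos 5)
Your proposal is correct and is exactly the paper's argument: the paper derives this claim by applying Lemma \ref{add} with $s=2p+1$, $U_1=B$, $V_1=C$, $U=X\cap B$ and $V=X\cap C$, which is precisely your instantiation (your added bookkeeping that $G[B,C]$ is the perfect matching $\mathcal{Z}_1$ and that $b+c=p-1\le s-1$ is a correct verification of the lemma's hypotheses that the paper leaves implicit).
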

	
Let us continue the case $a=2p+2$ and $b+2c\equiv 1$(mod 3). Take $\gamma'$ as in Claim \ref{case3.1add}, and let $\mathcal{R}_{0a}^1$ be the family of all such $X\cup\{\gamma'\}$. Note that $\mathcal{R}_{0a}^1\subset\mathcal{R}_{0a}^*$. Since $\sum_{y\in Y} y\equiv 2$(mod 3), take $\beta'\in Y\cap B$, then $Y\setminus\{\beta'\}\in \mathcal{R}_1$ and $X=(X\cup\{\gamma'\})\setminus(Y\setminus\{\beta'\})\in \mathcal{D}(\mathcal{F}).$ Clearly $\mathcal{R}_{0a}^1$ is intersecting since each $k$-set in $\mathcal{R}_{0a}^1$ contains $A$.
	
	For the case $a=2p+2$ and $b+2c\equiv 2$ (mod 3), applying Lemma \ref{delete} by taking $s=2p+1$, $U_1=B$, $V_1=C$, $U=Y\cap B$ and $V=Y\cap C$, we have the following claim.
	
	\begin{claim}\label{case3.1delete}
		Suppose that $a=2p+2$ and $b+2c\equiv 2$(mod 3). Let $Y=\mathcal{N}\setminus X$ and $N(Y\cap B)\not\subset Y\cap C$. If $b\le c-2$, then there exists $\beta''\in Y\cap B$ such that
		$e(G[(Y\cap B)\setminus\{\beta''\}, Y\cap C])\equiv (b+1)$(mod 2). If $b>c-2$, then there exists $\beta''\in Y\cap B$ such that $e(G[(Y\cap B)\setminus\{\beta''\}, Y\cap C])\equiv (c-1)$(mod 2).
	\end{claim}
	
Let us continue considering the case $a=2p+2$ and $b+2c\equiv 2$(mod 3). Note that $X\cup\{\gamma''\}\in\mathcal{R}_1$ for $\gamma''\in Y\cap C$. If $N(Y\cap B)\not\subset Y\cap C$, then take $\beta''$ as in Claim \ref{case3.1delete}. If $N(Y\cap B)\subset Y\cap C$, then take $\beta''\in Y\cap B$. Let $\mathcal{R}_{0a}^2$ be the family of all such $Y\setminus\{\beta''\}$. Note that $\mathcal{R}_{0a}^2\subset\mathcal{R}_{0a}^*$ and $X=(X\cup\{\gamma''\})\setminus(Y\setminus\{\beta''\})\in \mathcal{D}(\mathcal{F})$. Clearly $\mathcal{R}_{0a}^2$ is intersecting since for any $R_0^1$ and $R_0^2$ in $\mathcal{R}_{0a}^2$ we have $|R_0^1|=|R_0^2|=3p+2$ and $R_0^1\cup R_0^2\subset B\cup C$, then $|R_0^1\cap R_0^2|\ge 2p+2$.
	
\begin{claim}\label{case3-r0aintersecting}
Let $\mathcal{R}_{0a}^*=\mathcal{R}_{0a}^1\cup\mathcal{R}_{0a}^2$,  then $\mathcal{R}_{0a}^*$ is intersecting.
\end{claim}
\noindent {\em Proof of Claim \ref{case3-r0aintersecting}.} Since we have explained that $\mathcal{R}_{0a}^1$ is intersecting and $\mathcal{R}_{0a}^2$ is intersecting, it is sufficient to show that $\mathcal{R}_{0a}^1$ and $\mathcal{R}_{0a}^2$ are cross-intersecting.
	Suppose that there exist $R_1\in \mathcal{R}_{0a}^1$ and $R_2\in \mathcal{R}_{0a}^2$ such that $R_1\cap R_2=\emptyset.$ If $N(R_2\cap B)\subset R_2\cap C$, then $N(R_1\cap C)\subset N(R_1\cap B)$, a contradiction to Claim \ref{case3.1add}. Let $|R_1\cap B|=b'$ and $|R_1\cap C|-1=c'$. By Claim \ref{case3.1add}, $e(G[R_1\cap B, R_1\cap C])\equiv b'$(mod 2) if $b'\le c'$ and $e(G[R_1\cap B, R_1\cap C])\equiv c'$(mod 2) if $b'>c'$. By Claim \ref{case3.1delete}, $e(G[R_2\cap B, R_2\cap C])\equiv b'$(mod 2) if $b'\le c'$ and $e(G[R_2\cap B, R_2\cap C])\equiv c'$(mod 2) if $b'>c'$. So $e(G[R_1\cap B, R_1\cap C])+e(G[R_2\cap B, R_2\cap C])\equiv 0$(mod 2). Since $p$ is even, $|R_1\cap B|+|R_1\cap C|=p$ is even. By Lemma \ref{sum}, $e(G[R_1\cap B, R_1\cap C])+e(G[R_2\cap B, R_2\cap C])$ is odd, a contradiction.\q
	
Next, we consider $b=2p+1$. Note that $\sum_{x\in X} x\equiv 2p+1+2c$ (mod 3). If $2p+1+2c\equiv 2$(mod 3), then $\sum_{y\in Y} y\equiv 1$(mod 3). Let $\alpha\in Y\cap A$ and $\gamma \in Y\cap C$. Then $X\cup\{\gamma\}, Y\setminus\{\alpha\}\in \mathcal{R}_1\subset \mathcal{F}$ and $X=(X\cup\{\gamma\})\setminus(Y\setminus\{\alpha\})\in\mathcal{D}(\mathcal{F})$. In this case, we are fine.
	
	For the case $b=2p+1$ and $2p+1+2c\equiv 0$ (mod 3), applying Lemma \ref{add} by taking $s=2p+1$, $U_3=C$, $V_3=A$, $U=X\cap C$ and $V=X\cap A$, we have the following claim.
	
	\begin{claim}\label{case3.2add}
		Suppose that $b=2p+1$ and $2p+1+2c\equiv 0$(mod 3). If $c\le a-|X\cap\{6p+3\}|$, then there exists a vertex $\alpha'\in (Y\cap A)\setminus\{6p+3\}$ such that
		$e(G[(X\cap A)\cup\{\alpha'\}, X\cap C])\equiv c$(mod 2). If $c>a-|X\cap\{6p+3\}|$, then there exists a vertex $\alpha'\in (Y\cap A)\setminus\{6p+3\}$ such that $e(G[(X\cap A)\cup\{\alpha'\}, X\cap C])\equiv (a-|X\cap\{6p+3\}|)$(mod 2). Furthermore $N((X\cap A)\cup\{\alpha'\})\not\subset X\cap C.$
	\end{claim}
	
Let us continue the case $b=2p+1$ and $2p+1+2c\equiv 0$(mod 3). Take $\alpha'$ as in Claim \ref{case3.2add} and let $\mathcal{R}_{0b}^1$ be the family of all such $X\cup\{\alpha'\}$. Note that $\mathcal{R}_{0b}^1\subset\mathcal{R}_{0b}^*$. Since $\sum_{y\in Y} y\equiv 0$(mod 3), take $\gamma'\in Y\cap C$, then $Y\setminus\{\gamma'\}\in \mathcal{R}_1$ and $X=(X\cup\{\alpha'\})\setminus(Y\setminus\{\gamma'\})\in \mathcal{D}(\mathcal{F}).$ Clearly $\mathcal{R}_{0b}^1$ is intersecting since each $k$-set $\mathcal{R}_{0b}^1$ contains $B$.
	
	For the case $b=2p+1$ and $2p+1+2c\equiv 1$ (mod 3), applying Lemma \ref{delete} by taking $s=2p+1$, $U_3=C$, $V_3=A$, $U=Y\cap C$ and $V=Y\cap A$, we have the following claim.
	
	\begin{claim}\label{case3.2delete}
		Suppose that $b=2p+1$ and $2p+1+2c\equiv 1$(mod 3). Let $Y=\mathcal{N}\setminus X$ and $N(Y\cap C)\not\subset Y\cap A$. If $c\le a-|X\cap\{6p+3\}|-2$, then there exists $\gamma''\in Y\cap C$ such that
		$e(G[Y\cap A, (Y\cap C)\setminus\{\gamma''\}])\equiv (c+1-|X\cap\{6p+3\}|)$(mod 2). If $c>a-|X\cap\{6p+3\}|-2$, then there exists $\gamma''\in Y\cap C$ such that $e(G[Y\cap A, (Y\cap C)\setminus\{\gamma''\}]\equiv (a-1)$(mod 2).
	\end{claim}
	
Let us continue the case $b=2p+1$ and $2p+1+2c\equiv 1$(mod 3). Note that $X\cup\{\alpha''\}\in\mathcal{R}_1$ for $\alpha''\in Y\cap A$. If $N(Y\cap C)\not\subset Y\cap A$, then take $\gamma''$ as in Claim \ref{case3.2delete}. If $N(Y\cap C)\subset Y\cap A$, then take any fixed $\gamma''\in Y\cap C$. Let $\mathcal{R}_{0b}^2$ be the family of all such $Y\setminus\{\gamma''\}$. Note that $\mathcal{R}_{0b}^2\subset\mathcal{R}_{0b}^*$ and $X=(X\cup\{\alpha''\})\setminus(Y\setminus\{\gamma''\})\in \mathcal{D}(\mathcal{F})$. Clearly $\mathcal{R}_{0b}^2$ is intersecting since for any $R_0^1$ and $R_0^2$ in $\mathcal{R}_{0b}^2$ we have $|R_0^1|=|R_0^2|=3p+2$ and $R_0^1\cup R_0^2\subset A\cup C$, then $|R_0^1\cap R_0^2|\ge 2p+1$.
	
\begin{claim}\label{case3-r0binterscting}
Let $\mathcal{R}_{0b}^*=\mathcal{R}_{0b}^1\cup\mathcal{R}_{0b}^2$, then $\mathcal{R}_{0b}^*$ is intersecting.
\end{claim}
\indent {\em Proof of Claim \ref{case3-r0binterscting}.} Since we have explained that $\mathcal{R}_{0b}^1$ is intersecting and $\mathcal{R}_{0b}^2$ is intersecting, it is sufficient to show that $\mathcal{R}_{0b}^1$ and $\mathcal{R}_{0b}^2$ are cross-intersecting.
	Suppose that there exist $R_1\in \mathcal{R}_{0b}^1$ and $R_2\in \mathcal{R}_{0b}^2$ such that $R_1\cap R_2=\emptyset$. If $N(R_2\cap C)\subset R_2\cap A$, then $N(R_1\cap A)\subset R_1\cap C$, a contradiction to Claim \ref{case3.2add}.
	Let $c'=|R_1\cap C|$ and $a'=|R_1\cap A|-1$.
	By Claim \ref{case3.2add}, $e(G[R_1\cap A, R_1\cap C])\equiv c'$(mod 2) if $c'\le a'-|R_1\cap\{6p+3\}|$ and $e(G[R_1\cap A, R_1\cap C])\equiv (a'-|R_1\cap\{6p+3\}|)$(mod 2) if $c'>a'-|R_1\cap\{6p+3\}|$.
	By Claim \ref{case3.2delete}, $e(G[R_2\cap A, R_2\cap C])\equiv (c'-|R_1\cap\{6p+3\}|)$(mod 2) if $c'\le a'-|R_1\cap\{6p+3\}|$ and $e(G[R_2\cap B, R_2\cap C])\equiv a'$(mod 2) if $c'>a'-|R_1\cap\{6p+3\}|$. So $e(G[R_1\cap A, R_1\cap C])+e(G[R_2\cap A, R_2\cap C])\equiv |R_1\cap\{6p+3\}|$(mod 2). Since $p$ is even, $|R_1\cap A|+|R_1\cap C|=p+1$ is odd. By Lemma \ref{sum}, $e(G[R_1\cap A, R_1\cap C])+e(G[R_2\cap A, R_2\cap C])\equiv p-|R_1\cap\{6p+3\}|\equiv 1+|R_1\cap\{6p+3\}|$, a contradiction.\q
	
\begin{claim}\label{case3-r0ar0binterscting}
$\mathcal{R}_{0a}^*$ and $\mathcal{R}_{0b}^*$ are cross-intersecting.
\end{claim}
\indent {\em Proof of Claim \ref{case3-r0ar0binterscting}.} Recall that $\mathcal{R}^*_{0a}=\mathcal{R}^1_{0a}\cup\mathcal{R}^2_{0a}$, $\mathcal{R}^*_{0b}=\mathcal{R}^1_{0b}\cup\mathcal{R}^2_{0b}$, and every member in $\mathcal{R}^1_{0a}$ contains all elements in $A$ and at least one element in $C$, every member in $\mathcal{R}^2_{0a}$ consists of some elements  but not all elements in $B$ and some elements in $C$, every member in $\mathcal{R}^1_{0b}$ contains all elements in $B$ and at least one element in $A$, every member in $\mathcal{R}^2_{0b}$ consists of some elements but not all elements in $C$ and some elements in $A$. Clearly, $\mathcal{R}^1_{0a}$ and $\mathcal{R}^1_{0b}$ are cross-intersecting, $\mathcal{R}^1_{0a}$ and $\mathcal{R}^2_{0b}$ are cross-intersecting and $\mathcal{R}^2_{0a}$ and $\mathcal{R}^1_{0b}$ are cross-intersecting. Every member in $\mathcal{R}^2_{0a}$ contains at least $p+2$ elements in $C$, and every member in $\mathcal{R}^2_{0b}$ contains at least $p$ elements in $C$, so they must intersect in $C$. Therefore $\mathcal{R}_{0a}^*$ and $\mathcal{R}_{0b}^*$ are cross-intersecting.\q

For $c=2p+1$, $\sum_{x\in X} x\equiv b+4p+2$ (mod 3). If $b+4p+2\equiv 1$(mod 3), then $\sum_{y\in Y} y\equiv 2$(mod 3). Let $\alpha\in Y\cap A$ and $\beta \in Y\cap B$. Then $X\cup\{\alpha\}, Y\setminus\{\beta\}\in \mathcal{R}_1\subset \mathcal{F}$ and $X=(X\cup\{\alpha\})\setminus(Y\setminus\{\beta\})\in\mathcal{D}(\mathcal{F})$. In this case, we are fine.
	
	For the case $c=2p+1$ and $b+4p+2\equiv 2$ (mod 3), applying Lemma \ref{add} by taking $s=2p+1$, $U_1=A$, $V_1=B$, $U=X\cap A$ and $V=X\cap B$, we have the following claim.
	
	\begin{claim}\label{case3.3add}
		Suppose that $c=2p+1$ and $b+4p+2\equiv 2$(mod 3). If $a-|X\cap\{6p+3\}|\le b$, then there exists a vertex $\beta'\in Y\cap B$ such that
		$e(G[X\cap A, (X\cap B)\cup\{\beta'\}])\equiv (a-|X\cap\{6p+3\}|)$(mod 2). If $a-|X\cap\{6p+3\}|>b$, then there exists a vertex $\beta'\in Y\cap B$ such that
		$e(G[X\cap A, (X\cap B)\cup\{\beta'\}])\equiv b$(mod 2). Furthermore $N((X\cap B)\cup\{\beta'\})\not\subset X\cap A.$
	\end{claim}
	
Let us continue the case $c=2p+1$ and $b+4p+2\equiv 2$(mod 3). Take $\beta'$ as in Claim \ref{case3.3add} and let $\mathcal{R}_{0c}^1$ be the family of all such $X\cup\{\beta'\}$. Note that $\mathcal{R}_{0c}^1\subset\mathcal{R}_{0c}^*$. Since $\sum_{y\in Y} y\equiv 1$(mod 3), take $\alpha'\in Y\cap A$, then $Y\setminus\{\alpha'\}\in \mathcal{R}_1$ and $X=(X\cup\{\beta'\})\setminus(Y\setminus\{\alpha'\})\in \mathcal{D}(\mathcal{F}).$ Clearly $\mathcal{R}_{0c}^1$ is intersecting since each $k$-set in $\mathcal{R}_{0c}^1$ contains $C$.
	
	For the case $c=2p+1$ and $b+4p+2\equiv 0$ (mod 3), applying Lemma \ref{delete} by taking $s=2p+1$, $U_1=A$, $V_1=B$, $U=Y\cap A$ and $V=Y\cap B$, we have the following claim.
	
	\begin{claim}\label{case3.3delete}
		Suppose that $b+4p+2\equiv 0$(mod 3). Let $Y=\mathcal{N}\setminus X$ and $N(Y\cap A)\not\subset Y\cap B$. If $a-|X\cap\{6p+3\}|\le b-2$, then there exists $\alpha''\in (Y\cap A)\setminus\{6p+3\}$ such that
		$e(G[(Y\cap A)\setminus\{\alpha''\}, Y\cap B])\equiv a$(mod 2). If $a-|X\cap\{6p+3\}|>b-2$, then there exists $\alpha''\in (Y\cap A)\setminus\{6p+3\}$ such that $e(G[(Y\cap A)\setminus\{\alpha''\}, Y\cap B])\equiv (b+|X\cap\{6p+3\}|)$(mod 2).
	\end{claim}
	
Let us continue the case $c=2p+1$ and $b+4p+2\equiv 0$(mod 3). Note that $X\cup\{\beta''\}\in\mathcal{R}_1$ for $\beta''\in Y\cap B$. If $N(Y\cap A)\not\subset Y\cap B$, then take $\alpha''$ as in Claim \ref{case3.3delete}. If $N(Y\cap A)\subset Y\cap B$, then take any fixed $\alpha''\in Y\cap A$. Let $\mathcal{R}_{0c}^2$ be the family of all such $Y\setminus\{\alpha''\}$. Note that $\mathcal{R}_{0c}^2\subset\mathcal{R}_{0c}^*$ and $X=(X\cup\{\beta''\})\setminus(Y\setminus\{\alpha''\})\in \mathcal{D}(\mathcal{F})$. Clearly $\mathcal{R}_{0c}^2$ is intersecting since for any $R_0^1$ and $R_0^2$ in $\mathcal{R}_{0c}^2$ we have $|R_0^1|=|R_0^2|=3p+2$ and $R_0^1\cup R_0^2\subset A\cup B$, then $|R_0^1\cap R_0^2|\ge 2p+1$.
	
\begin{claim}\label{case3-r0cintersecting}
Let $\mathcal{R}_{0c}^*=\mathcal{R}_{0c}^1\cup\mathcal{R}_{0c}^2$, then $\mathcal{R}_{0c}^*$ is intersecting.
\end{claim}
\noindent {\em Proof of Claim \ref{case3-r0cintersecting}.} Since we have explained that $\mathcal{R}_{0c}^1$ is intersecting and $\mathcal{R}_{0c}^2$ is intersecting, it is sufficient to show that $\mathcal{R}_{0c}^1$ and $\mathcal{R}_{0c}^2$ are cross-intersecting.
	Suppose that there exist $R_1\in \mathcal{R}_{0c}^1$ and $R_2\in \mathcal{R}_{0c}^2$ such that $R_1\cap R_2=\emptyset$. If $N(R_2\cap A)\subset R_2\cap B$, then $N(R_1\cap B)\subset R_1\cap C$, a contradiction to Claim \ref{case3.3add}. Let $a=|R_1\cap A|$ and $b=|R_1\cap B|-1$. By Claim \ref{case3.3add}, $e(G[R_1\cap A, R_1\cap B])\equiv a-|R_1\cap\{6p+3\}|$(mod 2) if $a-|R_1\cap\{6p+3\}|\le b$ and $e(G[R_1\cap A, R_1\cap B])\equiv b$(mod 2) if $a-|R_1\cap\{6p+3\}|>b$. By Claim \ref{case3.3delete}, $e(G[R_2\cap A, R_2\cap B])\equiv a-1$(mod 2) if $a-|R_1\cap\{6p+3\}|\le b$ and $e(G[R_2\cap A, R_2\cap B])\equiv b+1+|R_1\cap\{6p+3\}|$(mod 2) if $a-|R_1\cap\{6p+3\}|>b$. So $e(G[R_1\cap A, R_1\cap B])+e(G[R_2\cap A, R_2\cap B])\equiv |R_1\cap\{6p+3\}|+1$(mod 2). Since $p$ is even, $|R_1\cap A|+|R_1\cap B|=p+1$ is odd. By Lemma \ref{sum}, $e(G[R_1\cap A, R_1\cap B])+e(G[R_2\cap A, R_2\cap B])\equiv p-|R_1\cap\{6p+3\}|\equiv |R_1\cap\{6p+3\}|$(mod 2), a contradiction.\q

\begin{claim}\label{case3-allintersecting}
$\mathcal{R}_{0a}^*\cup\mathcal{R}_{0b}^*\cup\mathcal{R}_{0c}^*$ is intersecting.
\end{claim}
\noindent {\em Proof of Claim \ref{case3-allintersecting}.} It is sufficient to show that  $\mathcal{R}_{0a}^*\cup\mathcal{R}_{0b}^*$ and $\mathcal{R}_{0c}^*$ are cross-intersecting. Without loss of generality, we prove that $\mathcal{R}_{0a}^*$ and $\mathcal{R}_{0c}^*$ are cross-intersecting here. Recall that $\mathcal{R}^*_{0a}=\mathcal{R}^1_{0a}\cup\mathcal{R}^2_{0a}$, $\mathcal{R}^*_{0c}=\mathcal{R}^1_{0c}\cup\mathcal{R}^2_{0c}$, and every member in $\mathcal{R}^1_{0a}$ contains all elements in $A$ and at least one element in $C$, every member in $\mathcal{R}^2_{0a}$ consists of some elements  but not all elements in $B$ and some elements in $C$, every member in $\mathcal{R}^1_{0c}$ contains all elements in $C$ and at least one element in $B$, every member in $\mathcal{R}^2_{0c}$ consists of some elements but not all elements in $A$ and some elements in $B$. Clearly, $\mathcal{R}^1_{0a}$ and $\mathcal{R}^1_{0c}$ are cross-intersecting, $\mathcal{R}^1_{0a}$ and $\mathcal{R}^2_{0c}$ are cross-intersecting and $\mathcal{R}^2_{0a}$ and $\mathcal{R}^1_{0c}$ are cross-intersecting. Every member in $\mathcal{R}^2_{0a}$ contains at least $p+1$ elements in $B$, and every member in $\mathcal{R}^2_{0c}$ contains at least $p+1$ elements in $B$, so they must intersect in $B$. Therefore, we may infer that $\mathcal{R}_{0a}^*$ and $\mathcal{R}_{0c}^*$ are cross-intersecting. Similarly, $\mathcal{R}_{0b}^*$ and $\mathcal{R}_{0c}^*$ are cross-intersecting. Hence, $\mathcal{R}_{0a}^*\cup\mathcal{R}_{0b}^*\cup\mathcal{R}_{0c}^*$ is intersecting.\q
	
	In summary, we have constructed an intersecting family $\mathcal{F}\subset{\mathcal{N}\choose k}$ such that ${\mathcal{N}\choose k-1}\subset\mathcal{D}(\mathcal{F})$ if $k\equiv 2$ (mod 3).
	
	Next, we will show that $\cup_{j=1}^{k-2}{[2k]\choose j}\subset\mathcal{D}(\mathcal{F})$. Recall that $\mathcal{R}_1\subset \mathcal{F}$, $\mathcal{D}(\mathcal{R}_1)\subset \mathcal{D}(\mathcal{F})$. We have the following claim.

\begin{claim}\label{case3-r1diff}
$\cup_{j=1}^{k-2}{\mathcal{N}\choose j}\subset\mathcal{D}(\mathcal{R}_1)$.
\end{claim}
\noindent {\em Proof of Claim \ref{case3-r1diff}.} It is sufficient to show that for any $X\subset \mathcal{N}$ we have $X\subset \mathcal{D}(\mathcal{R}_1)$, where $1\le |X|\le k-2$.
	
	We consider $|X|=1$ first. We may assume that $X\subset A$ (the cases $X\subset B$ and $X\subset C$ are similar). Without loss of generality, let $X=\{0\}$. Take $B'=\{1, 4, \dots, \frac{3k}{2}-2\}$ and $C'=\{2, 5, \dots, \frac{3k}{2}-4\}$. Since $X\cup B'\cup C'\in \mathcal{R}_1$ and $\{3\}\cup B'\cup C'\in \mathcal{R}_1$, $X\subset \mathcal{D}(\mathcal{R}_1)$.
	
	We consider $2\le |X|=j\le k-2$ next. We will use induction on $j$. We will show for $|X|=k-2$ first. Let $Y=\mathcal{N}\setminus X$. Let $\sum_{x\in X} x\equiv i$ (mod 3), then $\sum_{y\in Y} y\equiv 2i$ (mod 3). For convenience, denote $N_0=A$, $N_1=B$ and $N_2=C$. If $|Y\cap N_{i+1}|\ge 2$ and $|Y\cap N_{i+2}|\ge 2$, then let $b_1, b_2\in Y\cap N_{i+1}$ and $c_1, c_2\in Y\cap N_{i+2}$ (here, the subscripts $i+1$ and $i+2$ in $N_{i+1}$ and $N_{i+2}$ are in the sense of mod 3). So $X\cup\{c_1, c_2\}\in \mathcal{R}_1$ and $Y\setminus\{b_1, b_2\}\in \mathcal{R}_1$ and $X=(X\cup\{c_1, c_2\})\setminus(Y\setminus\{b_1, b_2\})$, i.e. $X\in \mathcal{D}(\mathcal{R}_1)$. Therefore we may assume that $|Y\cap N_{i+1}|\le 1$ or $|Y\cap N_{i+2}|\le 1$. Without loss of generality, we assume that $|Y\cap N_{i+1}|\le 1$, then $|Y\cap N_{i+2}|\ge 3$ and $|Y\cap N_i|\ge 3$ by noting that $|Y|=k+2$. Let $a_1\in Y\cap N_i$ and $c_1, c_2, c_3\in Y\cap N_{i+2}$. So $X\cup\{c_1, c_2\}\in \mathcal{R}_1$ and $Y\setminus\{a_1, c_3\}\in \mathcal{R}_1$ and $X=(X\cup\{c_1, c_2\})\setminus(Y\setminus\{a_1, c_3\})$, i.e. $X\in \mathcal{D}(\mathcal{R}_1)$.
	
	Assume that ${\mathcal{N}\choose j+1}\subset\mathcal{D}(\mathcal{R}_1)$ where $2\le j\le k-3$, it is sufficient to show that ${\mathcal{N}\choose j}\subset\mathcal{D}(\mathcal{R}_1)$. Let $|X|=j$ and $Y=\mathcal{N}\setminus X$. Note that one of $|Y\cap A|$, $|Y\cap B|$ and $|Y\cap C|$ is at least 3. Without loss of generality, we may assume that $|Y\cap A|\ge 3$. Let $a_1\in Y\cap A$ and $X'=X\cup\{a_1\}$. Then by the induction hypothesis, there exists $I\subset Y\setminus\{a_1\}$ and $Y'\subset Y\setminus (I\cup \{a_1\})$ such that $X'\cup I\in \mathcal{R}_1$, $Y'\cup I\in \mathcal{R}_1$ and $X\cup\{a_1\}=(X'\cup I)\setminus(Y'\cup I)$.
	
	We claim that $Y'\cap A=\emptyset$. Otherwise if there exists $a_2\in Y'\cap A$, then $X\cup I\cup\{a_2\}\in \mathcal{R}_1$ and $X=(X\cup I\cup\{a_2\})\setminus(Y'\cup I)$, hence $X\in \mathcal{D}(\mathcal{R}_1)$.
	
	We claim that $A\setminus (X'\cup I)=\emptyset$. Otherwise if there exists $a_3\in A\setminus (X'\cup I)$, then we claim that $Y'\cap B=\emptyset$ or $Y'\cap C=\emptyset$. Otherwise if $b_1\in Y'\cap B$ and $c_1\in Y'\cap C$. Then $Y'\cup I\cup\{a_1, a_3\}\setminus\{b_1, c_1\}\in \mathcal{R}_1$ and $X=(X'\cup I)\setminus(Y'\cup I\cup\{a_1, a_3\}\setminus\{b_1, c_1\})$. Hence $X\in \mathcal{D}(\mathcal{R}_1)$. Without loss of generality, we may assume that $Y'\cap C=\emptyset$. So $Y'\subset B$ with $|Y'|=j+1\ge 3$. If $|A\setminus (X'\cup I)|\ge 2$, then let $a_3, a_4\in A\setminus (X'\cup I)$. Assume that $b_1, b_2, b_3\in Y'\cap B$. Then $Y'\cup I\cup \{a_1, a_3, a_4\}\setminus\{b_1, b_2, b_3\}\in \mathcal{R}_1$ and $X=(X\cup I\cup\{a_1\})\setminus(Y'\cup I\cup \{a_1, a_3, a_4\}\setminus\{b_1, b_2, b_3\})$, hence $X\in \mathcal{D}(\mathcal{R}_1)$. So $|A\setminus (X'\cup I)|=|\{a_3\}|=1$ and $A\setminus\{a_3\}\subset X\cup I\cup \{a_1\}$. Since $|A\setminus\{a_3\}|=2p+1$ and $|X\cup I\cup\{a_1\}|=k=3p+2$, $|C\setminus (X'\cup I)|\ge 1$. Let $c_1\in C\setminus (X'\cup I)$. Then $I\cup Y'\cup\{a_1, c_1\}\setminus\{b_1, b_2\}\in \mathcal{R}_1$ and $X=(X\cup I\cup\{a_1\})\setminus(I\cup Y'\cup\{a_1, c_1\}\setminus\{b_1, b_2\})$, hence $X\in \mathcal{D}(\mathcal{R}_1)$. So $A\setminus (X'\cup I)=\emptyset$. Therefore $A\subset X'\cup I$. Since $|A\setminus X|\ge 3$, $|I\cap A|\ge 2$. Assume that $\theta_1, \theta_2\in I\cap A$.
	
	We claim that $|Y'\cap B|=0$ or $|Y'\cap C|=0$. Otherwise, there exists $b_1\in Y'\cap B$ and $c_1\in Y'\cap C$, let $I'=I\cup\{b_1, c_1\}\setminus\{\theta_1, \theta_2\}$ and $Y''=Y'\cup\{\theta_1, \theta_2\}\setminus\{b_1, c_1\}$. So $X'\cup I'\in \mathcal{R}_1$, $I'\cup Y''\in\mathcal{R}_1$, and $|Y''\cap A|\ge 1$, a contradiction. Without loss of generality, we may assume that $Y'\cap C=\emptyset$, then $Y'\subset B$ and $|Y'|=j+1\ge 3$. Let $b_1, b_2\in Y'\cap B$. Since $X\cup I\cup\{a_1\}=k=3p+2$, $A\subset X\cup I\cup\{a_1\}$ and $Y'\cap C=\emptyset$, there exist $c\in C\setminus(X\cup I\cup\{a_1\}\cup Y')$. Let $Y''=Y\setminus\{b_1, b_2\}\cup\{a_1, c\}$. Then $Y''\cup I\in \mathcal{R}_1$ and $X=(X\cup I\cup\{a_1\})\setminus(Y''\cup I)$, a contradiction.
\q

The proof is complete.\q

\subsection{Proof of Theorem \ref{3k-2}}\label{sub3k-2}
\noindent{\em Proof of Theorem \ref{3k-2}.} Let $\mathcal{F}\subset{[n]\choose k}$ be an intersecting family satisfying ${[n]\choose k-1}\subset \mathcal{D}(\mathcal{F})$, we show that $n\le 3k-2$.
Suppose  on the contrary that $n\ge 3k-1$.  Since ${[n]\choose k-1}\subset \mathcal{D}(\mathcal{F})$, for any fixed $X\subset {[n]\choose k-1}$, there exist $F_1, F_2\in \mathcal{F}$ such that $X=F_1\setminus F_2$. Without loss of generality, we may assume that $X=[2, k]$, $F_1=[k]$ and $F_2=[k+1, 2k-1]\cup\{1\}$. Since ${[n]\choose k-1}\subset \mathcal{D}(\mathcal{F})$,  for any fixed $X'\in {[2k, n]\choose k-1}$, there exists $F_3\in \mathcal{F}$ such that $X'\subset F_3$. Since  $\mathcal{F}$ is intersecting, $F_3\cap F_1\not=\emptyset$ and $F_3\cap F_2\not=\emptyset$. So $F_3=X'\cup\{1\}$, thus $\mathcal{F}[\{1\}\cup [2k, n]]$ is  a full star centered at $1$. Next we will show that $\mathcal{F}[\{1\}\cup [2, 2k-1]]$ is  a full star centered at $1$.
 Since ${[n]\choose k-1}\subset \mathcal{D}(\mathcal{F})$, for any $X''\in {[2, 2k-1]\choose k-1}$ there exists $F_4\in\mathcal{F}$ such that $X''\subset F_4$. Since $\mathcal{F}$ is intersecting,   $F_4\cap F\not=\emptyset$ for any $F\in \mathcal{F}[\{1\}\cup[2k, n]]$. Since $n\ge 3k-1$, there are at least $k$ vertices in $[2k, n]$,  so $F_4$ must contain $1$ (otherwise, there is a $(k-1)$-subset $P$ of $[2k, n]$ such that $(\{1\}\cup P)\cap F_4=\emptyset$, a contradiction). Therefore,  $\mathcal{F}[\{1\}\cup [2, 2k-1]]$ is  a full star centered at $1$. Finally we show that $\mathcal{F}$ is a full star centered at $1$.   Since ${[n]\choose k-1}\subset \mathcal{D}(\mathcal{F})$, for any $Y\in {[2, n]\choose k-1}$, there exists $G\in \mathcal{F}$ and $Y\subset G$.
 Let $Y_1=Y\cap [2, 2k-1]\not=\emptyset$ and $Y_2=Y\cap [2k, n]\not=\emptyset$. We have shown that   $\mathcal{F}[\{1\}\cup [2, 2k-1]]$ is a  full star  centered at $1$,  since there are at least $k$ vertices in $[2, 2k-1]\setminus Y$, and $G$ intersects with every set in the full star $\mathcal{F}[\{1\}\cup [2, 2k-1]]$,  $G$ must contain $1$, i.e., $G=\{1\} \cup Y$. Therefore $\mathcal{F}$ is a full star  centered at $1$, which contradicts to the assumption that ${[n]\choose k-1}\subset \mathcal{D}(\mathcal{F})$.
\q

\section{Concluding Remarks}
By Theorems \ref{main} and \ref{oddk}, there is an  intersecting family $\mathcal{F}\subset {[2k]\choose k}$ such that every set $S$ in  $\cup_{j=1}^{k-1}{[2k]\choose j}$ is contained in a sunflower of  $\mathcal{F}$ with $2$ petals  and $S$ is a petal. 
	Is there an intersecting family $\mathcal{F}\subset {[n]\choose k}$ such that every non-empty subset $X$ with less than $k$ elements is contained in exactly one sunflower of $2$ petals in $\mathcal{F}$ and $X$ is a petal of this sunflower?  We show that there is no such a family.
	
	\begin{theo}\label{diffdesign}
		There is no intersecting family $\mathcal{F}\subset {[n]\choose k}$ such that every $X\in {[n]\choose k-1}$ is contained in exactly one sunflower of $2$ petals in $\mathcal{F}$ and $X$ is a petal of this sunflower.
	\end{theo}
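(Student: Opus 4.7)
My plan is to assume such an intersecting family $\mathcal F$ exists and derive a contradiction by cases on $n$. For each $X\in\binom{[n]}{k-1}$, the hypothesis supplies a unique pair $\{A(X),B(X)\}\subseteq\mathcal F$ with $A(X)\setminus B(X)=X$; put $v(X):=A(X)\cap B(X)\in[n]\setminus X$, so $A(X)=X\cup\{v(X)\}$ and $B(X)=\sigma(X)\cup\{v(X)\}$, where $\sigma(X):=B(X)\setminus\{v(X)\}$ is a fixed-point-free involution on $\binom{[n]}{k-1}$ matching disjoint pairs and $v(X)=v(\sigma(X))\notin X\cup\sigma(X)$. Uniqueness yields the key rigidity: for $A\in\mathcal F$ and $a\in A$, some $B\in\mathcal F$ meets $A$ only at $a$ if and only if $v(A\setminus\{a\})=a$ (and then $B$ is unique), so the unordered pairs $\{A_1,A_2\}\subseteq\mathcal F$ with $|A_1\cap A_2|=1$ are exactly the $N:=\binom{n}{k-1}/2$ sunflower pairs prescribed by the design.

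\emph{Small $n$.} When $n\le 2k-2$ every two $k$-subsets of $[n]$ share at least two elements, so no $2$-sunflower of $k$-sets exists in $\binom{[n]}{k}$ at all and the statement is vacuous. When $n=2k-1$, the inclusion $B(X)\subseteq[n]\setminus X$ together with $|B(X)|=|[n]\setminus X|=k$ forces $B(X)=[n]\setminus X$; ranging over $X$ yields $\mathcal F\supseteq\binom{[n]}{k}$, hence $\mathcal F=\binom{[n]}{k}$. But inside $\binom{[n]}{k}$ each $X\in\binom{[n]}{k-1}$ is a petal of the $k$ distinct sunflowers $\{X\cup\{v\},[n]\setminus X\}$, one for each $v\in[n]\setminus X$, contradicting uniqueness (for $k\ge 2$; the case $k=1$ is trivial since $\emptyset$ cannot be a petal of a $2$-sunflower of singletons).

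\emph{The main range $2k\le n\le 3k-2$ (via Theorem~\ref{3k-2}).} Here I would exploit the partition $\binom{[n]}{k-1}=\bigsqcup_{v\in[n]}M_v$ with $M_v:=\{X:v(X)=v\}\subseteq\binom{[n]\setminus\{v\}}{k-1}$, each $M_v$ of even size and carrying a perfect disjointness-matching through $\sigma$. The rigidity gives $\sum_{A\in\mathcal F}|U(A)|=\binom{n}{k-1}$ with $U(A):=\{a\in A:v(A\setminus\{a\})=a\}$ and $|U(A)|\le k$, hence $|\mathcal F^*|\ge\binom{n}{k-1}/k$ for $\mathcal F^*:=\{A\in\mathcal F:U(A)\ne\emptyset\}$. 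Because any two distinct $A_1,A_2\in\mathcal F^*$ satisfy $|A_1\cap A_2|=1$ (iff they form a sunflower pair) or $|A_1\cap A_2|\ge 2$, one gets $\sum_{v\in[n]}\binom{d^*(v)}{2}\ge N+2\bigl(\binom{|\mathcal F^*|}{2}-N\bigr)$ with $d^*(v):=|\{A\in\mathcal F^*:v\in A\}|$, and combining with Erd\H{o}s--Ko--Rado $|\mathcal F|\le\binom{n-1}{k-1}$ (and, for $n=2k$, the complement bound $|\mathcal F|+|\mathcal F^*|\le\binom{2k}{k}$ coming from $[n]\setminus A\notin\mathcal F$ for $A\in\mathcal F$) should push the parameters into an infeasible region once $n\le 3k-2$. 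The main obstacle is this last step: the raw inequalities are tight, so converting the forced cross-intersections of size $\ge 2$ into a strict numerical contradiction for \emph{every} admissible partition $\{M_v\}_{v\in[n]}$ (not only special candidates such as near-stars or Fano-type configurations) will likely require element-by-element accounting that uses the bound $n\le 3k-2$ essentially.
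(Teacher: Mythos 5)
Your rigidity setup is correct and matches what the paper extracts in its Claim~\ref{dd1}: the unique pair $\{A(X),B(X)\}$ forces a singleton core $v(X)$, the map $\sigma$ is a fixed-point-free disjointness involution with $v(\sigma(X))=v(X)$, and the $1$-intersecting pairs of $\mathcal F$ are exactly the $\binom{n}{k-1}/2$ prescribed sunflower pairs. Your treatment of $n\le 2k-2$ (vacuous) and $n=2k-1$ (forcing $\mathcal F=\binom{[2k-1]}{k}$ and exhibiting $k$ sunflowers through one petal) is sound and is essentially the paper's Claim~\ref{dd2}. The genuine gap is everything after that: for the range $2k\le n\le 3k-2$ you propose a global double count ($\sum_{A}|U(A)|=\binom{n}{k-1}$, a lower bound on $\sum_v\binom{d^*(v)}{2}$, Erd\H{o}s--Ko--Rado, a complement bound at $n=2k$) but you never derive a contradiction from it; you acknowledge yourself that the inequalities are tight and that closing them ``will likely require element-by-element accounting.'' As it stands no value of $n$ with $2k\le n\le 3k-2$ is ruled out, so the theorem is not proved. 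In particular the hardest and only essential case, $n=2k$, is entirely open in your write-up, and there is no evidence the proposed inequalities are even strict there.

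The paper closes this range with two short structural arguments rather than counting, and your own rigidity statement already supplies all the needed ingredients. First (Claim~\ref{dd3}) it shows $n\le 2k$: fix $[k]\in\mathcal F$; each $X\in\binom{[n]\setminus[k]}{k-1}$ extends to a distinct $F=X\cup\{v\}\in\mathcal F$ with $v\in[k]$ by intersection, and if $n\ge 2k+1$ there are more than $k$ such $F$, so two of them meet $[k]$ in the same singleton $\{v\}$ and the $(k-1)$-set $[k]\setminus\{v\}$ is a petal of two distinct sunflowers through $[k]$. This pins $n=2k$ exactly, and there the paper builds an explicit configuration (the set $U=\{u_1,\dots,u_k\}$, the sets $F_i$, then $X_1$, $X_2$ and their forced companions $G_x$, $H_y$, $I_{x'}$, $J_{y'}$) and contradicts uniqueness via $|F_1\cap F_2|\le k-2$ from Claim~\ref{dd1}. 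If you want to repair your proof, replace the counting heuristic with this kind of pigeonhole-plus-explicit-configuration argument; invoking Theorem~\ref{3k-2} to get $n\le 3k-2$ is a weaker reduction than what a direct pigeonhole gives you, and it leaves you fighting a numerical battle that, on the available evidence, cannot be won by the stated inequalities alone.
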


\noindent{\em Proof of Theorem \ref{diffdesign}.} To the contrary, suppose that there exists an intersecting family $\mathcal{F}\subset {[n]\choose k}$ such that any $X\in {[n]\choose k-1}$ appears exactly once in $\mathcal{D}(\mathcal{F})$. We have the following claims.
	
	\begin{claim}\label{dd1}
		For any $X\in {[n]\choose k-1}$, we have $|\mathcal{F}[[n]\setminus X]|=1$. Therefore, $|F_1\cap F_2|\le k-2$ for any $F_1, F_2\in\mathcal{F}$ and $n\ge 2k$.
	\end{claim}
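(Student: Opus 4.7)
The plan is to prove the three assertions of the claim in sequence. For the first, that $|\mathcal{F}[[n]\setminus X]|=1$ for every $X\in\binom{[n]}{k-1}$, I would start from the fact that $X$ is realized exactly once in $\mathcal{D}(\mathcal{F})$: there is a unique ordered pair $(F_1,F_2)\in\mathcal{F}^{\,2}$ with $F_1\setminus F_2=X$. Because $|X|=k-1=|F_1|-1$, one can write $F_1=X\cup\{c\}$ with $c\notin X$, and the requirement $F_1\setminus X\subseteq F_2$ together with $F_2\cap X=\emptyset$ forces $c\in F_2$ and $F_2\subseteq[n]\setminus X$. So $F_2\in\mathcal{F}[[n]\setminus X]$, giving the lower bound. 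For the upper bound, let $F\in\mathcal{F}[[n]\setminus X]$ be arbitrary. Intersectivity forces $F\cap F_1\neq\emptyset$, but $F\cap X=\emptyset$ and $F_1\setminus X=\{c\}$ together yield $c\in F$; then $F_1\setminus F=X$ makes $(F_1,F)$ another valid pair realizing $X$, and the exactly-once hypothesis forces $F=F_2$.

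For the second assertion, I would argue by contradiction: assume two distinct members $F_1,F_2\in\mathcal{F}$ satisfy $|F_1\cap F_2|=k-1$. Set $X=F_1\cap F_2\in\binom{[n]}{k-1}$, so $F_1=X\cup\{a\}$ and $F_2=X\cup\{b\}$ with $a\neq b$. By the first part, there is a unique $F^{*}\in\mathcal{F}$ disjoint from $X$. Intersectivity gives $\emptyset\neq F_1\cap F^{*}\subseteq F_1\setminus X=\{a\}$, so $a\in F^{*}$, and symmetrically $b\in F^{*}$. But then both $F_1\setminus F^{*}=X$ and $F_2\setminus F^{*}=X$ hold, exhibiting $X$ as a difference via two distinct ordered pairs and contradicting the exactly-once hypothesis.

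For $n\ge 2k$: the first part produces, for any $X\in\binom{[n]}{k-1}$, some $F^{*}\in\mathcal{F}$ with $F^{*}\subseteq[n]\setminus X$, which already yields $n\ge 2k-1$. If equality held, then $[n]\setminus X$ would be the only $k$-subset of $[n]\setminus X$, forcing $[n]\setminus X\in\mathcal{F}$ for every $(k-1)$-set $X$, hence $\mathcal{F}=\binom{[n]}{k}$. But then, fixing $X$, each $c\in[n]\setminus X$ produces a valid pair $(X\cup\{c\},\,[n]\setminus X)$ with difference $X$; this gives $k$ distinct realizations, contradicting uniqueness whenever $k\ge 2$.

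The main obstacle is really the first part, where one must simultaneously deploy the intersecting condition (to force $c\in F$ for every $F$ disjoint from $X$) and the exactly-once hypothesis (to upgrade that to $F=F_2$). Once the unique disjoint member $F^{*}$ is in hand, the other two parts fall out by substituting $F^{*}$ into carefully chosen pairs and tracking how the set differences behave.
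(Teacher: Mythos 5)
Your proof is correct, and for the main assertion $|\mathcal{F}[[n]\setminus X]|=1$ it is essentially the paper's argument: both proofs take the set $F_1\supset X$ from the (unique) representation $X=F_1\setminus F_2$, note that $F_1\setminus X$ is a single element $c$, and use the intersecting property to force $c$ into every member of $\mathcal{F}$ disjoint from $X$, so that each such member $F$ yields the representation $F_1\setminus F=X$ and uniqueness pins $F$ down. Where you diverge is in the ``therefore'' clauses, which the paper's proof of this claim does not actually spell out: your derivation of $|F_1\cap F_2|\le k-2$ via the unique disjoint member $F^*$ is the intended (and correct) deduction, and your proof of $n\ge 2k$ --- observing that the first part gives $n\ge 2k-1$ and that $n=2k-1$ would force $\mathcal{F}={[n]\choose k}$, which realizes each $(k-1)$-set $k$ times --- is a genuinely different and arguably cleaner route than the paper's, which instead proves $n\ge 2k$ as a separate claim (Claim 3.3) by an ad hoc case analysis on two specific members; your argument would render that separate claim unnecessary. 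No gaps.
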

	\noindent{\em Proof of Claim \ref{dd1}.} Since $X$ appears in $\mathcal{D}(\mathcal{F})$ and $\mathcal{F}$ is intersecting, there exists $F\in \mathcal{F}$ such that $X\subset F$. Let $\{f\}=F-X$. Assume that $F_1, F_2\in \mathcal{F}[[n]\setminus X]$. Since $\mathcal{F}$ is intersecting, then $f\in F_1\cap F_2\cap F$. Consequently, $X=F\setminus F_1=F\setminus F_2$, a contradiction.\q
	
	\begin{claim}\label{dd2}
		$n\ge 2k$.
	\end{claim}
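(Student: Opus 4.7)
The plan is to proceed by contradiction: assume $n\le 2k-1$ and extract a contradiction from Claim \ref{dd1}. Since Claim \ref{dd1} gives $|\mathcal{F}[[n]\setminus X]|=1\ge 1$ for every $X\in\binom{[n]}{k-1}$, there is at least one $k$-subset of $[n]\setminus X$, which forces $n-(k-1)\ge k$ and hence $n\ge 2k-1$. Combined with the contrary assumption, this pins $n$ down to $2k-1$, and I would aim to eliminate this remaining case.

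When $n=2k-1$, the set $[n]\setminus X$ has exactly $k$ elements, so the unique member of $\mathcal{F}[[n]\setminus X]$ must be $[n]\setminus X$ itself. Letting $X$ range over all of $\binom{[n]}{k-1}$, the complements $[n]\setminus X$ sweep out all of $\binom{[n]}{k}$, forcing $\mathcal{F}=\binom{[n]}{k}$. I would then exhibit two sets $F_1=\{1,2,\dots,k\}$ and $F_2=\{1,2,\dots,k-1,k+1\}$, both of which sit inside $[n]$ since $n=2k-1\ge k+1$ for $k\ge 2$. Since $F_1,F_2\in\mathcal{F}$ and $|F_1\cap F_2|=k-1$, this directly contradicts the intersection bound $|F_1\cap F_2|\le k-2$ asserted in Claim \ref{dd1}, completing the argument that $n\ge 2k$.

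The reasoning is short and essentially bookkeeping once Claim \ref{dd1} is available; the only points requiring mild attention are (i) invoking not just the singleton conclusion $|\mathcal{F}[[n]\setminus X]|=1$ but also the pairwise intersection bound $|F_1\cap F_2|\le k-2$ from Claim \ref{dd1}, and (ii) observing that $k\ge 2$ is needed for the explicit pair $F_1,F_2$ above to lie in $[n]=[2k-1]$, which is consistent with the standing setup of Theorem \ref{diffdesign}. There is no genuine obstacle beyond these routine checks.
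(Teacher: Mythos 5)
Your proof is correct, and it shares the paper's key reduction: both arguments pin the bad case down to $n=2k-1$ and exploit the fact that, for $X\in\binom{[n]}{k-1}$, Claim \ref{dd1} forces the unique member of $\mathcal{F}[[n]\setminus X]$ to be the $k$-set $[n]\setminus X$ itself. The endgame differs, though. The paper works with two concrete $(k-1)$-sets, deduces $\{k,k+1,\dots,2k-1\}\in\mathcal{F}$, and exhibits the $(k-1)$-set $\{k+1,\dots,2k-1\}$ as a petal of two distinct sunflowers, contradicting the ``exactly once'' hypothesis directly; it never determines $\mathcal{F}$ completely and never uses the bound $|F_1\cap F_2|\le k-2$. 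You instead let $X$ range over all of $\binom{[n]}{k-1}$ to conclude $\mathcal{F}=\binom{[2k-1]}{k}$ and then contradict that intersection bound. Your route is a little cleaner and, unlike the paper, explicitly disposes of $n<2k-1$ (the paper silently absorbs this into ``Assume that $n=2k-1$''). One point worth noting: the phrase ``and $n\ge 2k$'' in the statement of Claim \ref{dd1} must be read as part of its conclusion rather than as a hypothesis on the intersection bound; the bound $|F_1\cap F_2|\le k-2$ indeed holds for every $n$ (if $|F_1\cap F_2|=k-1$, the unique $G\in\mathcal{F}$ avoiding $X=F_1\cap F_2$ satisfies $F_1\setminus G=F_2\setminus G=X$, a double representation), so your use of it here is not circular. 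Ultimately both proofs rest on the same phenomenon --- a $(k-1)$-set realized as a difference in two ways --- and the difference is one of economy and completeness rather than substance.
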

	\noindent{\em Proof of Claim \ref{dd2}.} Assume that $n=2k-1$. Let $F_1=\{1, 2, \dots, k\}\in \mathcal{F}$ and $F_2=\{1, k+1, k+2, \dots, 2k-1\}\in\mathcal{F}$. Note that there exists $F_3, F_4\in \mathcal{F}$ such that $\{1, 2, \dots, k-1\}=F_3\setminus F_4$. If $F_3=F_1$, then $F_4=\{k, k+1, \dots, 2k-1\}$. $\{k+1, \dots, 2k-1\}=F_2\setminus F_1=F_4\setminus F_3$, a contradiction. If $F_3\not=F_1$, then let $F_3=\{1, 2, \dots k-1, k\}$. Therefore $F_4=\{k, k+1, \dots, 2k-1\}$. $\{k+1, \dots, 2k-1\}=F_2\setminus F_1=F_4\setminus F_1$, a contradiction.\q
	
	\begin{claim}\label{dd3}
		$n\le 2k$.
	\end{claim}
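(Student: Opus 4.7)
The plan is to exploit uniqueness by double-counting $\binom{n}{k-1}$ using two natural maps from $\binom{[n]}{k-1}$ into $\mathcal{F}$. The key observation is that for each $X\in\binom{[n]}{k-1}$, both a set $G(X)\in\mathcal{F}$ containing $X$ and a set $H(X)\in\mathcal{F}$ disjoint from $X$ are uniquely determined: uniqueness of $H(X)$ is exactly $|\mathcal{F}[[n]\setminus X]|=1$ from Claim~\ref{dd1}, while uniqueness of $G(X)$ follows because any two distinct members of $\mathcal{F}$ containing $X$ would share $k-1$ elements, contradicting $|F_1\cap F_2|\le k-2$ from Claim~\ref{dd1}. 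Existence of $G(X)$ is automatic since $X\in\mathcal{D}(\mathcal{F})$ forces $X\subset F$ for some $F\in\mathcal{F}$.

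Next I would count the fibers of these two maps. Each $G\in\mathcal{F}$ has exactly $k$ subsets of size $k-1$, and by uniqueness of $G(X)$ each such subset is a preimage of $G$; summing over $\mathcal{F}$ gives $k\,|\mathcal{F}|=\binom{n}{k-1}$. On the other hand, each $H\in\mathcal{F}$ is disjoint from exactly $\binom{n-k}{k-1}$ subsets of size $k-1$, and by uniqueness of $H(X)$ each such subset is a preimage of $H$; summing gives $\binom{n-k}{k-1}\,|\mathcal{F}|=\binom{n}{k-1}$.

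Equating the two expressions for $|\mathcal{F}|$ yields the identity $k=\binom{n-k}{k-1}$. Since $\binom{k}{k-1}=k$ and $\binom{n-k}{k-1}$ is strictly increasing in $n$ for $n\ge 2k$ (when $k\ge 2$), this identity together with Claim~\ref{dd2} ($n\ge 2k$) forces $n=2k$, and in particular $n\le 2k$; the case $k=1$ is trivial. The only genuinely delicate step is the uniqueness of $G(X)$, but this is an immediate consequence of the $|F_1\cap F_2|\le k-2$ bound already established in Claim~\ref{dd1}; the rest is a routine double count.
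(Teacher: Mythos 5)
Your argument is correct, and it takes a genuinely different route from the paper's. The paper's proof is local: it fixes one member $[k]\in\mathcal{F}$, observes that every $X\in{[n]\setminus[k]\choose k-1}$ lies in some $F\in\mathcal{F}$ with $|F\cap[k]|=1$, and for $n\ge 2k+1$ pigeonholes two such members $F_1\ne F_2$ onto the same element of $[k]$, which yields two representations of a single $(k-1)$-set as a difference, contradicting uniqueness. You instead run a global double count: the hypothesis together with Claim~\ref{dd1} makes both the member of $\mathcal{F}$ containing a given $X$ and the member of $\mathcal{F}$ disjoint from $X$ unique (your uniqueness of $G(X)$ is indeed immediate from $|F_1\cap F_2|\le k-2$, or alternatively from the uniqueness of the representation itself), so the two resulting maps from ${[n]\choose k-1}$ to $\mathcal{F}$ have fibers of constant sizes $k$ and ${n-k\choose k-1}$, giving $k\,|\mathcal{F}|={n\choose k-1}={n-k\choose k-1}|\mathcal{F}|$ and hence $k={n-k\choose k-1}$; combined with $n\ge 2k$ from Claim~\ref{dd2} and monotonicity this forces $n=2k$. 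What your approach buys is more than the claim: the exact identities $|\mathcal{F}|={n\choose k-1}/k$ and $k={n-k\choose k-1}$, and the latter already pins down $n=2k$ on its own for $k\ge 2$ (the only $m$ with ${m\choose k-1}=k$ is $m=k$), so your argument in fact subsumes Claim~\ref{dd2} as well. The paper's argument is more elementary and needs only one fixed member and a pigeonhole, but it produces no quantitative information about $|\mathcal{F}|$.
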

	\noindent{\em Proof of Claim \ref{dd3}.} Assume that $n\ge 2k+1$ and $\{1, 2, \dots, k\}\in \mathcal{F}$. Let $X\in {[n]\setminus[k]\choose k-1}$, then there exists $F\in \mathcal{F}$ such that $X\subset F$. Since $\mathcal{F}$ is intersecting, $|F\cap [k]|=1$. Since $n\ge 2k+1$, there exists $X_1, X_2\in {[n]\setminus[k]\choose k-1}$ with $X_1\subset F_1$ and $X_2\subset F_2$ such that $F_1\cap [k]=F_2\cap [k]$. Without loss of generality, assume that $F_1\cap [k]=F_2\cap [k]=\{k\}$, then $\{1, 2, \dots, k-1\}=[k]\setminus F_1=[k]\setminus F_2$, a contradiction.\q
	
	Let us continue the proof of Theorem \ref{diffdesign}. By Claim \ref{dd2}, \ref{dd3}, we may assume that $n=2k$. Without loss of generality, assume that $[n]=\{v_1, v_2, \dots, v_k, u_1, u_2, \dots, u_k\}$ and $U=\{u_1, u_2, \dots u_k\}\in \mathcal{F}$. For any $X\in {\{v_1, v_2, \dots, v_k\}\choose k-1}$, there exists $F\in \mathcal{F}$ such that $X\subset F$ and $|F\cap \{u_1, \dots, u_{k}\}|=1$. Assume that $F_{i}=\{v_1, v_2, \dots, v_{i-1}, v_{i+1}, \dots, v_k\}\cup\{u_i\}\in \mathcal{F}$.
	Let $X_1=\{u_1, \dots, u_{k-2}, v_1\}$ and assume that $G_x=X_1\cup\{x\}\in \mathcal{F}$. We claim that $x\notin \{u_{k-1}, u_k\}$. Otherwise $\{v_2, \dots, v_k\}=\{u_1, v_2, \dots, v_k\}\setminus U=\{u_1, v_2, \dots, v_k\}\setminus G_x$.
	Note that there exists $H_y=[n]\setminus (X_1\cup\{y\})$ such that $X_1=G_x\setminus H_y$. We claim that $y\notin \{v_{k-1}, v_k, u_{k-1}, u_k\}$. Otherwise if $y=\{v_{k-1}\}$ (the other cases are similar), then $H_y=\{v_2, \dots, v_{k-2}, v_k, u_{k-1}, u_k\}$. Note that $|F_{k-1}\cap H_y|=k-1$, a contradiction to Claim \ref{dd1}. Without loss of generality, let $y=v_2$. Let $X_2=\{u_1, \dots, u_{k-2}, v_2\}$,
	then there exists $I_{x'}, J_{y'}$ such that $X_2=I_{x'}\setminus J_{y'}$, where $I_x'=X_2\cup\{x'\}$ and $J_{y'}=[n]\setminus (X_2\cup \{y'\})$. Similarly, we have $y'\notin\{v_{k-1}, v_k, u_{k-1}, u_k\}$. Note that $H_y=\{v_3, \dots, v_k, u_{k-1}, u_k\}$ and $\{v_3, \dots, v_k, u_{k-1}, u_k\}\setminus \{y'\}\subseteq J_{y'}$. We have $|H_y\cap J_{y'}|=k-1$, a contradiction to Claim \ref{dd1}.\q

It is natural to ask the following question.
	
	\begin{ques}
		What is a minimum (or maximum) intersecting family $\mathcal{F}\subset {[2k]\choose k}$ such that $\cup_{j=0}^{k-1}{[2k]\choose j}=\mathcal{D}(\mathcal{F})$?
	\end{ques}

	By Theorem \ref{3k-2},  if there is an intersecting family $\mathcal{F}\subset {[n]\choose k}$ such that every  $X\in {[n] \choose k-1}$  is contained in one sunflower of at least $2$ petals in $\mathcal{F}$ and $X$ is a petal of this sunflower, then $n\le 3k-2$, if we further require that at least one set in ${[n] \choose k-1}$ is contained in a sunflower of  $3$ petals in $\mathcal{F}$ and $X$ is a petal of this sunflower, then $n=3k-2$. Is there such an intersecting family? Indeed, for $k=3$, the Fano plane  $\mathcal{F}=\{123, 146, 157, 247, 367, 265, 345\}$ is such a family: every $2$-subset $X$ of $[7]$ is contained in exactly one sunflower of $3$ petals in $\mathcal{F}$ and $X$ is a petal of the sunflower.
	
\begin{ques}\label{quesdesign}
For what values of $k$ is there an intersecting family $\mathcal{F}\subset {[3k-2]\choose k}$ such that every  $X\in  {[3k-2]\choose k-1} $  is contained in a sunflower of $3$ petals in $\mathcal{F}$ and $X$ is a petal of this sunflower?
\end{ques}

	We remark that combining Theorem \ref{main} and Odd Construction in \cite{Frankl}, we can obtain the following result for $t$-intersecting families.
	\begin{coro}\label{main2}
		For  $k\ge 3$  and $1\le t\le k-1$, there exists a $t$-intersecting family $\mathcal{F}\subset{[2k-t+1]\choose k}$ such that $\cup_{j=0}^{k-t}{[2k-2t+2]\choose j}\subset\mathcal{D}(\mathcal{F})$.
	\end{coro}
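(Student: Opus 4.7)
The plan is to reduce the $t$-intersecting statement to the intersecting ($t=1$) constructions already at our disposal by padding each member with a fixed disjoint set. Let $k'=k-t+1$, so $2\le k'\le k$, and fix a $(t-1)$-element set $T$ disjoint from $[2k-2t+2]$; then the ground set $[2k-2t+2]\cup T$ has exactly $2k-t+1$ elements. Given any intersecting $\mathcal{G}\subset {[2k-2t+2]\choose k'}$, I would define $\mathcal{F}=\{G\cup T\colon G\in\mathcal{G}\}$. Each $G\cup T$ has $k$ elements, and for $F=G\cup T$, $F'=G'\cup T$ in $\mathcal{F}$,
\[
|F\cap F'|=|G\cap G'|+(t-1)\ge 1+(t-1)=t,\qquad F\setminus F'=G\setminus G'.
\]
Thus $\mathcal{F}$ is $t$-intersecting and $\mathcal{D}(\mathcal{F})=\mathcal{D}(\mathcal{G})$.

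The main case is $k'\ge 3$, i.e.\ $t\le k-2$. Applying Theorem \ref{main} when $k'$ is even and the Odd Construction of \cite{Frankl} when $k'$ is odd supplies an intersecting $\mathcal{G}\subset{[2k']\choose k'}$ with $\bigcup_{j=0}^{k'-1}{[2k']\choose j}\subset\mathcal{D}(\mathcal{G})$. Because $2k'=2k-2t+2$ and $k'-1=k-t$, the padded family $\mathcal{F}$ then satisfies the required containment $\bigcup_{j=0}^{k-t}{[2k-2t+2]\choose j}\subset\mathcal{D}(\mathcal{F})$.

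The one delicate point is the boundary case $k'=2$ (equivalently $t=k-1$): the bootstrap fails here because no intersecting subfamily of ${[4]\choose 2}$ can contain every singleton of $[4]$ in its difference set (an intersecting graph on $[4]$ is either a star or a triangle, and in either case one point of $[4]$ is missing from $\mathcal{D}$). For this case I would exhibit an explicit construction on $[k+2]=[2k-t+1]$, namely $\mathcal{F}=\{F_1,F_2,F_3,F_4\}$ with $F_i=[k+2]\setminus\{i,k+2\}$, which is legitimate since $k\ge 3$. The complements all share the element $k+2$, so $|F_i\cap F_j|=k-1=t$ and a direct computation gives $F_i\setminus F_j=\{j\}$ for $i\ne j$, hence $\mathcal{D}(\mathcal{F})=\{\emptyset,\{1\},\{2\},\{3\},\{4\}\}={[4]\choose 0}\cup{[4]\choose 1}$, exactly what is needed since $k-t=1$ and $2k-2t+2=4$. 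The entire proof is thus a transparent lifting together with the patch for $k'=2$, which is the only step requiring any thought.
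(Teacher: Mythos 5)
Your proposal is correct and uses essentially the same padding argument as the paper: take the intersecting family $\mathcal{G}\subset\binom{[2k-2t+2]}{k-t+1}$ supplied by Theorem \ref{main} (for $k-t+1$ even) or the Odd Construction (for $k-t+1$ odd), and adjoin a fixed $(t-1)$-set to every member, which preserves all differences and boosts the intersection size to $t$. The one place where you go beyond the paper is the boundary case $t=k-1$, where $k-t+1=2$ and neither cited construction applies (and, as the paper itself notes in the introduction, no intersecting subfamily of $\binom{[4]}{2}$ has all four singletons in its difference family); your explicit family $F_i=[k+2]\setminus\{i,k+2\}$, $1\le i\le 4$, correctly settles this case, so your write-up actually closes a small gap that the paper's own one-line proof leaves open.
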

	\begin{proof} Let $\mathcal{G} \subseteq {[2k-2t+2]\choose k-t+1}$ be an intersecting subfamily of ${[2k]\choose k}$ such that $\mathcal{D}(\mathcal{F})=\cup_{j=0}^{k-1}{[2k]\choose j}$, whose existence is guaranteed by Theorem \ref{main} or Odd Construction in \cite{Frankl} by replacing $k$ with  $k-t+1$, i.e., $\cup_{j=0}^{k-t}{[2k-2t+2]\choose j}\subset \mathcal{D}(\mathcal{G})$. Let
		$$\mathcal{F}=\{G\cup\{2k-2t+3, \cdots, 2k-t+1\}: G\in \mathcal{G}\}.$$
		Then  $\mathcal{F}$ is $t$-intersecting and $\cup_{j=0}^{k-t}{[2k-2t+2]\choose j}\subset\mathcal{D}(\mathcal{F})$.\q
	\end{proof}
	
	On the other hand, for a $t$-intersecting family $\mathcal{F}\subset{[2k-t+1]\choose k}$, we have $\mathcal{D}(\mathcal{F}) \subset \cup_{j=0}^{k-t}{[2k-t+1]\choose j}$. It is interesting to study the following question.
	
	\begin{ques}
		Is there a $t$-intersecting family $\mathcal{F}\subset{[2k-t+1]\choose k}$ such that $\mathcal{D}(\mathcal{F}) =\cup_{j=0}^{k-t}{[2k-t+1]\choose j}$?  Is there a $t$-intersecting family $\mathcal{F}\subset{[n]\choose k}$ such that $\mathcal{D}(\mathcal{F}) =\cup_{j=0}^{k-t}{[n]\choose j}$ for some $n\ge 2k-t+1$?
	\end{ques}


\begin{thebibliography}{JluR00}
\bibitem{AK}
R. Ahlswede, L.H. Khachatrian, The complete intersection theorem for systems of finite sets, Eur. J. Comb. 18(2)(1997) 125-136.

\bibitem{DS}
I. Dinur, S. Safra, On the hardness of approximating minimum vertex cover, Ann. Math. 162(2005) 439-485.

\bibitem{EKR1961}
P. Erd\H{o}s, C. Ko, R. Rado, Intersection theorems for systems of finite sets,
Quart. J. Math. Oxf. 2(12) (1961) 313--320.

\bibitem{Frankl} P. Frankl, On the number of distinct differences in an intersecting family, Discrete Math., 2021(2), Paper No. 112210.
		
\bibitem{FKK} P. Frankl, S. Kiselev and A. Kupavskii, Best possible bounds on the number of distinct differences in intersecting families, European J. Comb., 2023(107), Paper No. 103601.

\bibitem{FT16}
P. Frankl, N. Tokushige, Invitation to intersection problems for finite sets,  J. Combin. Theory Ser. A 144 (2016) 157-211.

\bibitem{KL21} Nathan Keller, Noam Lifshitz, The Junta method for hypergraphs and the Erd\H os-Chv\'atal simplex conjecture, Advances in Mathematics 392(2021) 107991.
		
\bibitem{MaSc} J. Marica and J. Sch\"nheim, Differences of sets and a problem of Graham, Canad. Math. Bull., 1969(12), 635-637.
		
\end{thebibliography}
\end{document}